\newtheorem{thm}{Theorem}[section]
\newtheorem{lem}[thm]{Lemma}
\newtheorem{defn}[thm]{Definition}
\newtheorem{prop}[thm]{Proposition}
\newtheorem{coro}[thm]{Corollary}
\newtheorem{rmk}[thm]{Remark}
\numberwithin{equation}{section}
\newcommand{\N}{\mathbb{N}}
\newcommand{\C}{\mathbb{C}}
\newcommand{\R}{\mathbb{R}}
\newcommand{\abs}[1]{\left| {#1} \right|}
\newcommand{\bd}{\partial}
\newcommand{\dist}{\operatorname{\mathrm{dist}}}
\newcommand{\norm}[1]{\left\lVert{#1}\right\rVert}
\renewcommand{\hat}[1]{\widehat{#1}}
\renewcommand{\tilde}{\widetilde}
\renewcommand{\chi}{\mathbf{1}}
\journalname{Geometriae Dedicata}
\begin{document}

\title{Shrinking rates of horizontal gaps for generic translation surfaces \thanks{The first author is supported by NSF grants DMS-2055354 and DMS-452762, the Sloan foundation, Poincar{\'e} chair, and Warnock chair. The second author is supported by the Deutsche Forschungsgemeinschaft (DFG) -- Projektnummer 445466444.}
}



\author{Jon Chaika         \and
        Samantha Fairchild 
}


\institute{J. Chaika \at
              University of Utah, Salt Lake City, Utah, USA \\
              orcid: 000-0002-9577-0109\\
              \email{chaika@math.utah.edu}           
           \and
           S. Fairchild (Corresponding Author) \at
              Max Planck Institute, Leipzig Germany\\
              orcid: 0000-0001-7265-6904\\
              \email{samantha.fairchild@mis.mpg.de}
}

\date{Received: date / Accepted: date}

\maketitle

\begin{abstract}
A translation surface is given by polygons in the plane, with sides identified by translations to create a closed Riemann surface with a flat structure away from finitely many singular points. Understanding geodesic flow on a surface involves understanding saddle connections. Saddle connections are the geodesics starting and ending at these singular points and are associated to a discrete subset of the plane. To measure the behavior of saddle connections of length at most $R$, we obtain precise decay rates as $R\to \infty$ for the difference in angle between two almost horizontal saddle connections.

\keywords{Translation Surfaces \and Dynamical Systems \and Saddle Connections \and Mixing}
\subclass{37E35 \and 32G15 \and 37A25}
\vspace{.05in}
\noindent
\textbf{Statements and Declarations:}
\begin{enumerate}
\item Funding: The first author is supported by NSF grants DMS-2055354 and DMS-452762, the Sloan foundation, Poincar{\'e} chair, and Warnock chair. The second author was partially supported by the Deutsche Forschungsgemeinschaft (DFG) -- Projektnummer 445466444 and 507303619.
\item Author Contributions: All authors wrote and reviewed the manuscript
\item Data Availability Statement: Data sharing not applicable to this article as no datasets were generated or analysed during the current study.
\end{enumerate}
\end{abstract}


%
%
\section{Introduction}
Consider a finite collection of polygons in the plane, where all sides come in pairs of equal length with opposite orientations on the boundary of the polygons. Identifying these sides gives a compact finite type Riemann surface. The form $dz$ on the plane endows this Riemann surface with a holomorphic 1-form $\omega$. This structure is called a \textit{translation surface}. {Translation surfaces are stratified by their zeros and each connected component of each stratum supports a natural probability measure supported on unit area surfaces called \emph{Masur--Smillie--Veech (MSV)} measure. More background on translation surfaces can be found, for example in \cite{Massart22,MasurSmillie91}.} 
A \textit{saddle connection} $\gamma$ is a geodesic starting and ending at the zeroes of $\omega$ with no zeroes in between. Associated to $\gamma$, we define the \textit{holonomy vector} $v_{\gamma} = \int_{\gamma} \,d\omega \in \C$. To understand the geometry of a typical surface, much effort has gone into understanding the asymptotic behavior of holonomy vectors of saddle connections of length at most $R$ \cite{Dozier19,EskinMasur01,EMZ03,EMM15,Masur88Lower,Masur90QuadraticGrowth,NRW,Veech98,Vorobets05}, and more recently the asymptotic behavior of pairs of saddle connections \cite{ACM19,AFM22}. We will consider
$$\Lambda_\omega(R)= \{v_\gamma \in \C \cap B(0,R): \gamma \text{ is a saddle connection}\}$$
and
$$ \Theta_\omega(R) = \{\mbox{arg}(v_\gamma): v_\gamma \in \Lambda_{\omega}(R)\},$$
where $\mbox{arg}(v) \in [-\pi,\pi)$ is the angle $v$ makes with the horizontal. The sets $\Lambda_\omega(R)$ and $\Theta_\omega(R)$ are discrete subsets of $\C$ and $[-\pi, \pi)$, respectively, so we can define $$\zeta_\omega(R) = \min\{\phi \in \Theta_{\omega}(R): \phi \geq 0\} - \max\{\phi \in \Theta_{\omega}(R) : \phi < 0\}.$$
The main result of this paper is:

 \begin{thm}\label{thm:01law} 
	Let $\psi: [1,\infty) \to [1,\infty)$ be a nondecreasing function. In any connected component of a stratum of translation surfaces of genus at least 2, 
	\begin{enumerate}
		\item If $\int_1^\infty \frac{1}{t\psi(t)^2}\,dt < \infty$, then for MSV almost every $\omega$,
		$$\liminf_{R\to\infty}  \psi(R) R^2\zeta_\omega(R) = \infty.$$
		\item If $\int_1^\infty \frac{1}{t\psi(t)^2} \,dt = \infty$, then for MSV almost every $\omega$, 
		$$\liminf_{R\to\infty} \psi(R) R^2 \zeta_\omega(R) = 0.$$
	\end{enumerate}
\end{thm}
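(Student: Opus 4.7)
The plan is to reformulate Theorem \ref{thm:01law} as a shrinking target problem for the Teichmüller geodesic flow and then apply Borel--Cantelli in both directions. Let $g_t = \mathrm{diag}(e^{-t}, e^t)$, which contracts horizontal directions and expands vertical ones. If $v_\gamma = (x,y)$ is the holonomy of a saddle connection of $\omega$ with $|v_\gamma| \leq R$ and angle $\theta = \arg(v_\gamma)$ small, then $g_{\log R}v_\gamma = (x/R, Ry)$ has angle $\arctan(R^2 y/x) \approx R^2\theta$ with the horizontal, while its first coordinate lies in $(0,1]$. Writing
\[
S_\delta^\pm = \{(x,y) \in \R^2 : x \in (0,1],\ \pm y/x \in (0, \delta]\}, \qquad A_\delta = \{\omega : \exists\ \gamma^+, \gamma^-\ \text{saddle conn. with}\ v_{\gamma^\pm}\in S_\delta^\pm\},
\]
the event $\{R^2\zeta_\omega(R) < \delta\}$ is equivalent, up to fixed multiplicative constants, to $g_{\log R}\omega \in A_\delta$.

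The heart of the proof is the measure estimate $\mu(A_\delta) \asymp \delta^2$ for small $\delta$, where $\mu$ is the Masur--Veech measure. Let $N_\delta^\pm(\omega) = \#\{\gamma : v_\gamma \in S_\delta^\pm\}$. For the upper bound, $\chi_{A_\delta} \leq N_\delta^+ N_\delta^-$ gives $\mu(A_\delta) \leq \int N_\delta^+ N_\delta^-\,d\mu$, which is a pair-count of saddle connections landing in $S_\delta^+ \times S_\delta^-$. By the pair Siegel--Veech type results of \cite{ACM19,AFM22}, this equals $c\cdot \mathrm{vol}(S_\delta^+)\mathrm{vol}(S_\delta^-) + (\text{parallel-pair correction})$; the correction vanishes for small $\delta$ because $S_\delta^+$ and the reflection of $S_\delta^-$ through the origin are disjoint, so the $\delta^2$ main term dominates. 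For the lower bound $\mu(A_\delta) \geq c\delta^2$, I would apply the Paley--Zygmund inequality to $N_\delta^+ N_\delta^-$: its first moment is $\asymp \delta^2$ and its second moment, by a four-tuple Siegel--Veech argument, is dominated by coincident-pair contributions of order $\delta^2$.

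With the measure estimate in hand, discretize $t = n \in \N$. In the convergent case,
\[
\sum_n \mu\bigl(A_{\varepsilon/\psi(e^n)}\bigr) \leq C\varepsilon^2 \sum_n \psi(e^n)^{-2} < \infty
\]
precisely when $\int_1^\infty t^{-1}\psi(t)^{-2}\,dt < \infty$ (via the substitution $s = e^n$), so by the easy half of Borel--Cantelli, $g_n\omega \in A_{\varepsilon/\psi(e^n)}$ only finitely often almost surely; monotonicity of $\psi$ and the bounded distortion of $g_t$ on unit-length intervals handle the interpolation between integer and real $R$. For the divergent case I would use the Chung--Erd\H{o}s divergent Borel--Cantelli criterion, whose hypothesis is the pairwise quasi-independence
\[
\mu\bigl(g_{-m}A_{\delta_m} \cap g_{-n}A_{\delta_n}\bigr) \leq C\,\mu(A_{\delta_m})\mu(A_{\delta_n}) + O\bigl(e^{-\kappa|m-n|}\bigr),
\]
supplied by exponential mixing of the Teichmüller flow on strata (Avila--Gou\"ezel--Yoccoz) after regularizing $\chi_{A_\delta}$ to a function of controlled norm.

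The main obstacles are two-fold. First, the quadratic lower bound $\mu(A_\delta) \gtrsim \delta^2$ is substantially more delicate than the linear Siegel--Veech estimate $\mu(\{N_\delta^\pm \geq 1\}) \asymp \delta$, and is exactly where the $R^2$ scaling in the theorem (rather than $R$) originates: it requires genuine pair-correlation information for saddle connections on opposite sides of the horizontal axis, with careful treatment of coincident, parallel, and anti-parallel configurations. Second, $A_\delta$ reaches into the non-compact part of the stratum, where surfaces have short saddle connections of their own; direct application of exponential mixing is not valid there, so one must combine it with a quantitative recurrence estimate in the style of Eskin--Masur--Mirzakhani to truncate $A_\delta$ to a compact piece on which mixing applies, while verifying that the truncated measure remains $\asymp \delta^2$.
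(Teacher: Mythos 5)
Your overall framework matches the paper's: renormalize so that a small horizontal gap at scale $R$ becomes a hit of $g_{\log R}\omega$ on a target of measure $\asymp \delta^2$ with $\delta = \varepsilon/\psi$, use the easy Borel--Cantelli direction for convergence, and Chung--Erd\H{o}s for divergence. But there is a genuine gap at the heart of your divergence argument: you propose to supply pairwise quasi-independence solely from exponential mixing, i.e.\ $\mu(g_{-m}A_{\delta_m}\cap g_{-n}A_{\delta_n})\le C\mu(A_{\delta_m})\mu(A_{\delta_n})+O(e^{-\kappa|m-n|})$. After regularizing the indicator of a target of measure $\asymp\psi(b^n)^{-2}$, the Lipschitz norms force the error term to be of size roughly $e^{-\kappa|m-n|}$ with $\kappa$ fixed, so the bound is useful only when $|m-n|\gtrsim \log\bigl(1/\mu(A_{\delta_m})\bigr)$. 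For borderline divergent $\psi$, e.g.\ $\psi(t)=\sqrt{\log(t)\log\log(t)}$, the targets have measure $\sim 1/(n\log n)$, and the uncontrolled "nearby" pairs $m<n\le m+C\log m$ are fatal: as in Remark~\ref{rmk:technical}, one can pass to a subsequence $n_k$ with gaps $\sim C\log n_k$ along which $\sum_k\mu(A_{\delta_{n_k}})<\infty$, so no conclusion about the limsup can be drawn from far-pair quasi-independence alone. The paper handles exactly this by Assumption~\eqref{enum:4} of \cref{prop:Exp_Decay_Borel_Cantelli}: it builds auxiliary sets $B_i\subset A_i$ (angular-sector versions of the targets) and $C_j\supset A_j$ (a "short holonomy vector in an annulus" superset of square-root-larger measure) and proves the nearby-pair bound $\mu(B_i\cap C_j)\lesssim \mu(B_i)\bigl(2^{-(j-i)(1-\delta)}+\mu(C_j)^{(1+\delta)/2}\bigr)$ not by mixing but by circle averages of logsmooth functions $V_\delta$ (Athreya/Dozier-type quantitative estimates, \cref{sec:averages}). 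Your second listed obstacle (truncating to a compact part via Eskin--Masur--Mirzakhani recurrence) addresses the norm/compact-support issue in the mixing theorem, but it does not address, and cannot replace, this nearby-pair correlation input.

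Two smaller points. First, Chung--Erd\H{o}s only yields that the limsup set has positive measure; to reach the stated conclusion (for a.e.\ $\omega$, $\liminf \psi(R)R^2\zeta_\omega(R)=0$) you still need the upgrade of \cref{prop:div improvement}: near-invariance of the limsup set under forward geodesic flow plus ergodicity to get full measure, a small backward flow to convert membership in the targets into actual gap bounds at radius $b^{\rho_m}$, and $\sigma\to 0$ to force the liminf to zero; your proposal does not mention this step. Second, your measure estimate $\mu(A_\delta)\asymp\delta^2$ via pair Siegel--Veech transforms and Paley--Zygmund is plausible but heavier than needed and relies on second-moment (four-tuple) bounds not readily available; the paper's \cref{lem:measurebounds} obtains the lower bound by an elementary local computation in period coordinates near a surface with two horizontal saddle connections, and the upper bound from the Masur--Smillie estimate (\cref{lem:MasurSmillie}) after applying $g_t$, which you could adopt to simplify that part. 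Your $\varepsilon$-scaling trick in the convergence case (letting $\varepsilon=m\to\infty$ over a countable family of full-measure sets) is a legitimate alternative to the paper's auxiliary-function reduction (\cref{lem:convergence reduction}) and is fine as stated.
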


\begin{rmk}
Note that the choice of a horizontal gap is a convenience. Apply a rotation to the full measure set in Theorem~\ref{thm:01law}, and we obtain the same result in a different direction. Consider a countable subset $D_n= \{\theta_n\}_{n\in\mathbb{N}} \subseteq [0,2\pi)$. Since a countable union of measure 0 subsets is still measure zero, we obtain a natural corollary that the smallest gap of any of the directions in $D_n$ has the same decay rate as given in Theorem~\ref{thm:01law}.
\end{rmk}
 
We first provide an explanation for the scaling factor of $R^2$. Masur (\cite{Masur88Lower,Masur90QuadraticGrowth}) showed $|\Lambda_\omega(R)|$ has \emph{quadratic growth} in the sense that for each $\omega$ there exist constants $c_1,c_2$ so that 
$$c_1R^2\leq |\Lambda_\omega(R)|\leq c_2R^2$$ for all large enough $R$. {Because the total angle about the singular points for the flat metric is at most $4\pi(2g-2)$ and every saddle connection begins at ends at a singular point,} there are at most $4g-4$ saddle connections in the same direction, so $|\Theta_\omega(R)|$ also has quadratic growth. This result explains the scaling factor of $R^2$ in Theorem~\ref{thm:01law}. The quadratic growth of saddle connections was subsequently built on in \cite{EskinMasur01,EMM15,NRW,Veech98,Vorobets05}.

For every translation surface, \cite{Masur86} shows that $\Theta_{\omega} = \bigcup_R  \Theta_{\omega}(R)$ is dense in $[-\pi,\pi)$.  If we order the points $\theta_1 \leq \cdots \leq \theta_{|\Theta_\omega(R)|}$ in $\Theta_\omega(R)$, then by density the adjacent differences $\theta_{j+1} -\theta_j \to 0$ as $R\to\infty$ for every $\omega$. Thus $\zeta_\omega(R) \to 0$ as $R\to\infty$ for every $\omega$. If we reduce to almost every translation surface, Theorem~\ref{thm:01law} give a rate of convergence for a single direction, yielding partial information on how $\Theta_\omega(R)$ is distributed in $[-\pi, \pi)$. For example, by considering $\psi(t) = \log(t)$ and $\psi(t) = \sqrt{t}$, we know for a typical surface that the rate of convergence is shrinking faster than $(\log(R)R^2)^{-1}$ {along a subsequence} and slower than $R^{-\frac{5}{2}}$. However, we cannot expect Theorem~\ref{thm:01law} to hold for every translation surface. Indeed when $\omega$ is a \emph{lattice surface} (see \cite[Sections 5 and 7]{Massart22} for a definition), \cite{AthreyaChaika12} showed for every unbounded $\psi$ we have $\liminf_{R \to \infty}\psi(R)R^2 \zeta_{\omega}(R)=\infty$.

It is natural question to ask if Theorem~\ref{thm:01law} can be extended to measures supported on $\mathrm{SL}(2,\R)$-orbit closures which are not closed (lattice surfaces) or dense (connected component of a stratum). Though many pieces of our argument have analogous results for orbit closures, the local coordinate computations require great care centered on a very specific surface. In light of this, we save this question for future work.

The results of Theorem~\ref{thm:01law} consider the behavior of a single gap. There is also substantial work done on studying the behavior of the family of gaps. Namely one can study the entire set $\Theta_\omega(R)$. The distribution of normalized gaps exists for almost every $\omega$ by \cite{AthreyaChaika12}. In many cases, the distribution has also been computed \cite{ACL15,gap2ngon,KSW21,Sanchez21,UW16}. Considering the behavior of a single gap, which is the focus of the current paper, is orthogonal because the behavior of a single gap does not affect the distribution of gaps.

\subsection{Outline of proof}
The proof follows the now standard strategy of relating a problem about the geometry of a translation surface $\omega$ to the orbit of $\omega$ under Teichm\"uller geodesic flow, $g_t=\begin{pmatrix}e^t&0\\0&e^{-t}\end{pmatrix}$. In Section~\ref{sec:2} we reduce our problem about gaps to a \emph{shrinking target problem} for $g_t$. The shrinking targets are sets $A_t$ obtained by relating Theorem~\ref{thm:01law} to whether or not $g_t\omega \in A_{t}$ for arbitrarily large $t$. To prove Theorem~\ref{thm:01law} (2) we use independence results for the sets $g_{-t} A_t$.  A key tool to do this is the fact that the $g_t$ action is exponentially mixing (Section~\ref{sec:ExpDecayFar}). However, because our targets are not $\mathrm{SO}(2)$-invariant, the estimates from exponential mixing are not sufficient to treat all non-increasing sequences. See Remark~\ref{rmk:technical}  and Assumptions (3) and (4) in Proposition~\ref{prop:Exp_Decay_Borel_Cantelli}. (c.f.~\cite[Proposition 11.1]{KL21}). Section~\ref{sec:averages} establishes that Assumption (4) of Proposition~\ref{prop:Exp_Decay_Borel_Cantelli} is satisfied and provides a different argument to overcome the limitations from exponential mixing. In fact, in Section~\ref{sec:averages} even the targets we consider are different.  

\section{Reductions}\label{sec:2}

In this section we present a series of reductions: 

\begin{enumerate}
\item In Section~\ref{sec:defmeas} and Section~\ref{sec:targlem} we use renormalization to relate gaps to a shrinking target problem.
\item In Section~\ref{sec:convcase}, we consider the convergence case Theorem~\ref{thm:01law}(1) and prove it suffices to show that for almost every $\omega$ we have 
$$\liminf_{R\to\infty}  \psi(R) R^2\zeta_\omega(R) >0.$$
\item In Section~\ref{sec:divcase}, we consider the divergence case Theorem~\ref{thm:01law}(2) and prove it suffices to show that for a positive measure set of $\omega$ we have $$\liminf_{R\to\infty}  \psi(R) R^2\zeta_\omega(R)<\infty.$$
\item In Section~\ref{sec:axiom} we state and prove a partial converse to the Borel--Cantelli lemma (Proposition~\ref{prop:Exp_Decay_Borel_Cantelli}) that we use as the framework for proving Theorem~\ref{thm:01law}. 
\item {We conclude in Section~\ref{sec:proof01law} by proving Theorem~\ref{thm:01law} under one additional assumption. This additional assumption is in Proposition~\ref{prop:sufficient is satisfied}, which states the existence of the sets needed for Proposition~\ref{prop:Exp_Decay_Borel_Cantelli}.}

\end{enumerate}
{Section~\ref{sec:ExpDecayFar} and Section~\ref{sec:averages} are devoted to proving Proposition~\ref{prop:sufficient is satisfied}.}

\subsection{Definition and measure of the shrinking target sets} \label{sec:defmeas}

Fix $\mathcal{H}$ a connected component of a stratum of translation surfaces, {unmarked in the sense of \cite[Section 2.3]{CSW20}}. Note $\mathcal{H}$ has complex dimension $2g + s - 1$ with $s$ the number of distinct singularities. Fix $0<\delta<1$ as given by exponential mixing of the geodesic flow (Theorem~\ref{thm:AGY}), which depends only on $\mathcal{H}$. We will sample $\psi$ along a discrete set $\psi(b^k)$ for $k \in \N$ where $b = e^{\ell_0}$, and $\ell_0 \geq 1$ is as in Corollary~\ref{cor:AverageUpperBound}, for the given choice of $\delta$ and $I = (-\frac{\pi}{12}, \frac{\pi}{12})$. The definition of $\ell_0$ in turn depends on other constants in Section~\ref{sec:circleaverages}, which is a self-contained section with no dependency on the previous statements.

{We now define the primary sets that we use to develop our shrinking target problem.}
\begin{defn}[Definition of the A's]\label{def:As}
Fix $0<\sigma < 1$, and let $0\leq c<1$. Define $T_{c,\sigma,j}^\pm = T_{c,\sigma, j,\psi,b}^\pm \subseteq \C$ to be the trapezoids with corners given by $$c, 1, 1\pm i \frac{ \sigma}{\psi(b^j)}, c\pm i c\frac{\sigma}{\psi(b^j)}.$$ Set 
\begin{align*}
&H_{c,\sigma,j} = \left\{ \omega \in \mathcal{H}: \substack{\omega \text{ has a holonomy vector in } T_{c,\sigma,j}^+ \\ \text{ and a holonomy vector in  } T_{c,\sigma,j}^-}\right\}.\end{align*}
Finally for $k\in \N$, define $A_k = A_k(c,\sigma)=A_k(c,\sigma,\psi,b)=  g_{\log(b^k)} H_{c,\sigma, k}.$

\end{defn}
\begin{rmk}
	In the following we drop the dependence on $\psi$ and $b$ as they are fixed whenever we consider these sets. When the choice of $c$ or $\sigma$ is clear, or arbitrary, then for clarity we will suppress the dependence and simply write $A_k$ instead of $A_k(c,\sigma)$.
\end{rmk}

The remainder of this subsection is devoted to obtaining measure bounds for $A_k$. Let $\mu$ denote the MSV probability measure on $\mathcal{H}$, whose support is the locus of unit area surfaces in $\mathcal{H}$. Since $\mu$ is $\mathrm{SL}(,\mathbb{R})$-invariant, it suffices to understand $\mu(H_{c,\sigma,j})$.

\begin{lem}\label{lem:measurebounds}
	 Given a stratum $\mathcal{H}$, there exists positive finite constants $m = m(\mathcal{H})$, $M = M(\mathcal{H})$, 
	 $c_{\mathcal{H}}<1$, and $\sigma_\mathcal{H} <1$  chosen so that for all $0< \sigma < \sigma_\mathcal{H}$ 
	$$\frac{m \sigma^2}{\psi(b^j)^2} \leq \mu(H_{c_\mathcal{H},\sigma,j}) \leq \mu(H_{0,\sigma,j}) \leq \frac{M \sigma^2}{\psi(b^j)^2}.$$
\end{lem}
Before proceeding with the proof, we quote the following result of Masur--Smillie (as quoted from \cite{AthreyaChaika12}), and then develop the idea behind their result before proceeding to the proof of Lemma~\ref{lem:measurebounds}.

\begin{lem} \label{lem:MasurSmillie}
	There is a constant $M$ so that for all $\epsilon, \kappa > 0$, the subset of $\mathcal{H}$ consisting of flat surfaces which have a saddle connection of length at most $\epsilon$ has measure at most $M \epsilon^2$. The subset of flat surfaces which have a saddle connection of length at most $\epsilon$ and another nonhomologous saddle connection of length at most $\kappa$ has measure at most $M \epsilon^2 \kappa^2$. 
\end{lem}

We remark that all uses of $M$ here are not necessarily the same, but vary by at most a multiplicative constant, which we can see for example in the proof of Lemma~\ref{lem:measurebounds}. 

The upper bound of Lemma~\ref{lem:measurebounds} will follow from Lemma~\ref{lem:MasurSmillie}. In order to obtain the lower bound, we first give some background on the construction of the measure $\mu$ following \cite[p.464-5]{MasurSmillie91}. 

First, we allow $\mu$ to be a finite measure for the rest of the subsection, and note that the normalization to a probability measure will only affect the multiplicative constants up to dividing by the measure of the stratum. Now we can define $\mu$ on the flat structures of area $1$ via a cone measure $\tilde{\mu}$ over flat structures with area at most 1. The cone measure $\tilde{\mu}$ is inherited from a measure on relative cohomology, defined via charts coming from the developing map. For $\omega \in \mathcal{H}$ not in a proper orbifold locus, there exists $r_\omega >0$ so that the ball of radius $r_\omega$ about the pre-image of $\omega$ in relative cohomology is sent injectively to the coordinate chart about $\omega$. For a point in a proper orbifold locus of $\mathcal{H}$, we may choose a chart as in \cite[Section 2.3]{CSW20}. Such an orbifold chart has the identification of $\tilde{\mu}$ and Lebesgue measure on relative cohomology on the image of the chart via an almost everywhere $k$-to-1 mapping, where $k$ is the cardinality of the local group of the \emph{orbifold substrata}. In this situation, $r_\omega$ is chosen so that the map is $k$-to-$1$, off of any orbifold substrata intersected with the ball.

{In order to obtain lower bounds on the measure of a set, it suffices to obtain measure bounds on the cone measure $\tilde{\mu}$ in a fixed coordinate chart. To do this, we work in local coordinates by writing} $\omega = (x_1,x_2, x_3) \in \C\times \C \times \C^{2g+s-3}$ and $\widetilde{\mu}$ as Lebesgue measure on $\C^{2g + s-1}$. Consider the Euclidean metric using the notation $|\cdot|$ on each component by identifying $\C^\ell$ with $\R^{2\ell}$. Define $B_{\C^\ell}(z,r) = \{w\in \C^\ell: |w-z| <r\}$. We are now ready to prove Lemma~\ref{lem:measurebounds}.

\begin{theopargself}
\begin{proof}[of Lemma~\ref{lem:measurebounds}]
	For the upper bound, flowing by geodesic flow which preserves measure, $g_{\log\left(\sqrt{\sigma/\psi(b^k)}\right)} H_{0,\sigma,k}$ has two non-homologous vectors of length at most $\sqrt{\frac{2\sigma}{\psi(b^k)}}$, so by Lemma~\ref{lem:MasurSmillie} we obtain the desired upper bound.

	For the lower bound, we work in local coordinates around a surface $w_0$ with two horizontal saddle connections of length $1$. That is, in local coordinates, let $\omega_0= (x_1^0, x_2^0 , x_3^0)$ {be an area $1$ surface}, and let $r>0$ be the injectivity radius, or $k$-to-$1$ radius as described above. One can find such a surface in every connected component of every stratum. Indeed, \cite{KZ03} explicitly constructs surfaces, which are a single horizontal cylinder, in each connected component of every stratum of genus at least two, and then after flowing by $g_t$ if necessary so the cylinder has circumference at least $1$, we can obtain the desired representative. From such a surface one can vary the length of a pair of boundary horizontal saddle connections to obtain $\omega$. As remarked at the end of \cite[Section 1]{ZorichJS}, the two horizontal vectors are in fact non-homologous, and we can pick a basis of homology with the period coordinates represented by saddle connections (see \cite[Appendix A]{Per-Teich} and \cite[Proof of Theorem 4.1]{BG21Systoles}). {To work with} local coordinates, we will first prove the lower bound for $\tilde{\mu}$ and then 
	derive the lower bound for $\mu$.
		
	If necessary, shrink $r$ so that $r<1$ and the image of the chart is contained in a compact subset of $\mathcal{H}$. Notice that for any $c$, since $\psi(t) \geq 1$, $T_{c,\sigma, j}^\pm$ is always contained in the trapezoid $T_{c,\sigma,*}$ with vertices $c \pm ic\sigma, 1\pm i\sigma.$ So we can guarantee $T^\pm_{c,\sigma,j}$ is always contained in the chart whenever 
	 $T_{c,\sigma, *} \subset B_\C(1, r)$. This geometric condition can be satisfied for some $0< \sigma_{\mathcal{H}} < r$, $c_{\mathcal{H}}$ close to 1, and $\sigma < \sigma_{\mathcal{H}}$.  Define the set in $\C\times \C \times \C^{2g + s-3}$ by
\begin{equation} \label{eq:Balldef}
\tilde{H}_{c_\mathcal{H},\sigma,j} = T_{c_\mathcal{H},\sigma, j}^+ \times T_{c_\mathcal{H},\sigma,j}^- \times B \quad\text{ for } B= B_{\C^{2g+s-3}}(x_3^0,r).\end{equation}	By our choice of $r$ the measure  $\widetilde{\mu}$ on $\mathcal{H}$ is locally Lebesgue. Hence if ${{\textbf{m}}}_j$ is Lebesgue measure on $\C^j$, by symmetry of $T^\pm_{c_\mathcal{H},\sigma,j}$, {there is a constant $\tilde{m}$ so that}
	$$\widetilde{\mu}(\tilde{H}_{c_\mathcal{H},\sigma,j}) \geq \tilde{m}\cdot {\textbf{m}}_1(T^+_{c_\mathcal{H},\sigma,j})^2 {\textbf{m}}_{2g+s-3}(B).$$

	We compute the Lebesgue measure of the trapezoids by
	$${{\textbf{m}}}_1(T^\pm_{c_\mathcal{H},\sigma,j}) = \frac{1}{2} \left(\frac{\sigma}{\psi(b^j)} + c_\mathcal{H} \frac{\sigma}{\psi(b^j)}\right) (1-c_\mathcal{H})  = \frac{\sigma}{\psi(b^j)} \frac{(1-c_\mathcal{H}^2)}{2}.$$

The other $2g-3$ coordinates and the possibility that we are in a coordinate patch determined by an orbifold substrata only change these bounds by a multiplicative constant. Thus there are constants $m, m'$ depending on the dimension of $\mathcal{H}$ and $c_\mathcal{H}$ so that 
	\begin{equation}\label{eq:tildest}\tilde{\mu}({{\tilde H}}_{c_\mathcal{H},\sigma,j})  \geq m'\cdot {\tilde{m}\cdot}{\textbf{m}}_1(T^\pm_{c_\mathcal{H},\sigma,j})^2 = m \frac{\sigma^2}{\psi(b^j)^2}.\end{equation}		
	{To complete the proof, we need to pass from $\widetilde{\mu}$ to the cone measure $\mu$. {Specifically recall on a measurable set $X$ of area 1 translations surfaces, $$\mu(X) = \widetilde{\mu}(\{tx: x\in X, \, 0<t<1\}).$$ Notice that scaling each of the coordinates by a number $0<t<1$ decreases the area of the surface. Notice also that the area of the translation surface changes continuously as a function of the fixed local coordinates, and $\omega_0$ has area 1 with coordinates $((1,0),(1,0),x_3^0)$}. Thus, given $r, c_{\mathcal{H}}<1,\sigma_{\mathcal{H}}$ as above, we can choose  $\epsilon>0$, $r'$, $c'_{\mathcal{H}}$ and $\sigma_{\mathcal{H}}'$ satisfying $0<r'<r-{ |x_0|}\epsilon$, $c_{\mathcal{H}}<c_{\mathcal{H}}'<1$ and $0<\sigma_{\mathcal{H}}{'}<\sigma_{\mathcal{H}}$ so that every $\omega'$ in our coordinate patch with local coordinates in $$ T_{c_\mathcal{H}',\sigma, j}^+ \times T_{c_\mathcal{H}',\sigma,j}^- \times B((1-\epsilon)x_3^0,r')$$ with $\sigma<\sigma_{\mathcal{H}}'$ has area less than 1 and arises as $\omega'=t\omega$ for $\omega$ in our coordinate patch, with area 1 and with local coordinates in 
$$ T_{c_\mathcal{H},\sigma, j}^+ \times T_{c_\mathcal{H},\sigma,j}^- \times B.$$  We conclude by observing that the above estimate of $\tilde{\mu}$ {in \eqref{eq:tildest}} works to estimate 
$$\tilde{\mu}\left( T_{c_\mathcal{H}',\sigma, j}^+ \times T_{c_\mathcal{H}',\sigma,j}^- \times B((1-\epsilon)x_3,r') \right) $$ for all $\sigma<\sigma_{\mathcal{H}}'$ so long as $\epsilon>0$ is small enough and $r'$, $c'_{\mathcal{H}}$ and $\sigma_{\mathcal{H}}'$ satisfy $0<r'<r-{ |x_0|}\epsilon$, $c_{\mathcal{H}}<c_{\mathcal{H}}'<1$ and $0<\sigma_{\mathcal{H}}'<\sigma_{\mathcal{H}}$.}
		\qed
	\end{proof}
\end{theopargself}

\subsection{A lemma on targets} \label{sec:targlem}
For the following lemma and corollary, for generality we allow any $b>1$. Note that $b=e^{\ell_0}$ as defined before Definition~\ref{def:As} satisfies $b>1$ since we choose $\ell_0  \geq 1 $.
\begin{lem}\label{lem:Cauchy trick} Let $\phi:[1,\infty)\to [1,\infty)$ be nondecreasing. We have $$\int_1^\infty (t\phi(t))^{-1} dt=\infty$$ if and only if $$\sum_{j=1}^\infty \phi(b^j)^{-1}=\infty$$ for any $b>1$. 
\end{lem}

\begin{proof}
For ease of exposition we assume $b=2$, the general case is similar, but requires using floor and ceiling functions. 
For each $k\in \mathbb{N}$ we have 
$$2^k \frac 1 {2^k\phi(2^k)}\geq\int_{2^k}^{2^{k+1}}(t\phi(t))^{-1}dt\geq 2^k\frac 1{2^{k+1}\phi(2^{k+1})}.$$
It follows that $\sum_{j=1}^\infty \phi(2^j)^{-1} \geq \int_2^\infty (t\phi(t))^{-1}dt\geq \frac{1}{2} \sum_{j=2}^\infty \phi(2^j)^{-1} .$\qed
\end{proof}

\begin{coro}\label{cor:psidivergence} Let $\psi:[1,\infty)\to [1,\infty)$ be nondecreasing. We have 
$$\int_1^\infty (t\psi(t)^2)^{-1} dt=\infty$$
 if and only if 
 $$\sum_{j=1}^\infty \psi(b^j)^{-2}=\infty $$
 for any $b>1$. 
\end{coro}

\subsection{Convergence reduction}  \label{sec:convcase}
\begin{lem}\label{lem:convergence reduction}
{Suppose $\psi:[1,\infty)\to[1,\infty)$ is nondecreasing and also that $\int_1^\infty \frac{1}{t\psi(t)^2} \, dt <\infty$.} If 
$$\mu\left(\{\omega: \liminf_{R\to\infty} \psi(R) R^2 \zeta_\omega(R) >0\} \right) =1,$$ 
then $$\mu\left(\left\{\omega: \liminf_{R\to\infty} \psi(R) R^2 \zeta_\omega(R) =\infty\right\} \right) = 1.$$
\end{lem}
\begin{proof}
	We first construct a slightly smaller nondecreasing function $\psi_0(t)$ so that 
	$$\int_1^\infty \frac{1}{t{\psi_0}(t)^2} \,dt  <\infty\text{ and } \lim_{t\to\infty} \frac{\psi(t)}{{\psi_0}(t)} = \infty.$$
	To do this let $n_1 = 1$ and set 
	$$n_j = \sup\left\{T > n_{j-1}:\int_{n_{j-1}}^{T} \frac{1}{t\psi(t)^{2}} \,dt \leq 2^{-j}\int_1^\infty \frac{1}{t\psi(t)^{2}}\,dt.\right\}$$ For $j\in \mathbb{N}$ we piecewise define
	$${\psi_0}(t) = j^{-1} \psi(t) \text{ whenever } n_j \leq t < n_{j+1}.$$
	Observe that $\psi_0$ is also nondecreasing. 
	Then since $j\to \infty$ as $t\to\infty$, we have $\lim_{t\to\infty} \psi(t) / \psi_0(t) = \infty$. By our choice of $n_j$, we have $\int_1^\infty (t \psi_0(t)^2)^{-1} \,dt < \infty.$ By the assumption of Lemma~\ref{lem:convergence reduction} there is a full measure set of $\omega$ so that $\liminf_{t\to\infty} \psi_0(t) t^2 \zeta_\omega(t)>0$. From this we have the desired result that for a full measure set of $\omega$,
	$$\liminf_{t\to\infty} \psi(t) t^2 \zeta_\omega(t) \geq \left(\liminf_{t\to\infty} \frac{\psi(t)}{\psi_0(t)}\right) \left(\liminf_{t\to \infty} \psi_0(t) t^2 \zeta_\omega(t)\right) = \infty.$$\qed
	\end{proof}

\subsection{Divergence Case} \label{sec:divcase} Recall for a sequence of sets $(A_k)_{k=1}^\infty$, the limit superior is given by $\limsup A_i= \cap_{N=1}^\infty \cup_{i=N}^\infty A_i $.

\begin{prop}\label{prop:div improvement}
If for all $\sigma >0$, $\mu(\limsup A_k(0,\sigma,\psi,b))>0$ then $$\liminf_{R\to\infty}  \psi(R) R^2\zeta_\omega(R)=0$$ for $\mu$-a.e. $\omega$.
\end{prop}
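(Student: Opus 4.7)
The plan is to show that $L(\omega):=\liminf_{R\to\infty}\psi(R)R^2\zeta_\omega(R)$ is $\mu$-a.e.\ equal to a constant $c\in[0,\infty]$ by ergodicity, and to bound $c\leq 4\sigma\tau$ for every small $\sigma>0$ and every $\tau>1$, which forces $c=0$.

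For the invariance step, under $g_t$ a holonomy vector $v=(v_x,v_y)$ of $\omega$ maps to $(e^tv_x,e^{-t}v_y)$, so a near-horizontal $v$ has its length scaled by $\approx e^t$ and its argument by $\approx e^{-2t}$. Since the extremal arguments defining $\zeta_\omega(R)$ are attained at near-horizontal saddle connections for $R$ large, $\zeta_{g_t\omega}(S) = e^{-2t}\zeta_\omega(Se^{-t})(1+o(1))$; substituting $R=Se^{-t}$ and using that $\psi$ is nondecreasing gives
\[
L(g_t\omega)=\liminf_{R\to\infty}\psi(Re^t)R^2\zeta_\omega(R)\geq L(\omega)\qquad(t\geq 0).
\]
The standard truncation argument against $g_t$-invariance of $\mu$ upgrades this to $L\circ g_t=L$ a.e., and Masur--Veech ergodicity on each connected component of each stratum then gives $L\equiv c$ a.e.

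For the quantitative bound, define $K^{(\tau)}:=\{k\in\N:\psi(b^{k+1})\leq\tau\psi(b^k)\}$ for each $\tau>1$. Enumerating $\N\setminus K^{(\tau)}=\{k_1<k_2<\cdots\}$, monotonicity of $\psi$ gives $\psi(b^{k_i})>\tau^{i-1}\psi(b^{k_1})$, so $\sum_{k\notin K^{(\tau)}}\psi(b^k)^{-2}<\infty$. Combined with $\mu(A_k(0,\sigma))\leq M\sigma^2/\psi(b^k)^2$ coming from \cref{lem:measurebounds} and $g_t$-invariance of $\mu$, Borel--Cantelli yields $\mu(\limsup_{k\notin K^{(\tau)}}A_k(0,\sigma))=0$. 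Hence $E_\sigma^{(\tau)}:=\limsup_{k\in K^{(\tau)}}A_k(0,\sigma)$ satisfies $\mu(E_\sigma^{(\tau)})=\mu(\limsup_k A_k(0,\sigma))>0$ by hypothesis. For $\omega\in E_\sigma^{(\tau)}$ and $k\in K^{(\tau)}$ with $\omega\in A_k(0,\sigma)$, unpacking \cref{def:As} supplies holonomy vectors $v^\pm$ of $\omega$ with $|v^\pm|\leq\sqrt 2\,b^k=:R_k$ and $|\arg(v^\pm)|\leq\sigma b^{-2k}/\psi(b^k)$ (one positive, one negative), so $\zeta_\omega(R_k)\leq 2\sigma b^{-2k}/\psi(b^k)$. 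Since $b=e^{\ell_0}>\sqrt 2$ gives $R_k\leq b^{k+1}$,
\[
\psi(R_k)R_k^2\zeta_\omega(R_k)\leq\frac{4\sigma\,\psi(R_k)}{\psi(b^k)}\leq\frac{4\sigma\,\psi(b^{k+1})}{\psi(b^k)}\leq 4\sigma\tau.
\]
This holds for infinitely many $k$, so $L\leq 4\sigma\tau$ on $E_\sigma^{(\tau)}$; intersecting with the full-measure set $\{L=c\}$ gives $c\leq 4\sigma\tau$, and sending $\tau\to 1^+,\sigma\to 0^+$ yields $c=0$.

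The delicate part is rigorously establishing the near-invariance $\zeta_{g_t\omega}(S)\approx e^{-2t}\zeta_\omega(Se^{-t})$: since $g_t$ deforms the Euclidean disk of radius $S$ into an ellipse, one must verify that the minimizing and maximizing elements of $\Theta_{g_t\omega}(S)$ are images of saddle connections of $\omega$ whose holonomy lies in a horizontal cone where the length distortion is controlled, which should follow once $S$ is large because $\zeta_{g_t\omega}(S)\to 0$ forces the extremizers into an arbitrarily narrow cone about the horizontal direction.
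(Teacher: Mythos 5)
Your proof is correct, but it takes a genuinely different route from the paper's. The paper never analyzes how $\zeta_\omega$ transforms under $g_t$: it first upgrades $\mu(\limsup A_k)>0$ to $\mu(\limsup A_k)=1$ by showing $\limsup A_k$ is invariant under the forward flow (monotonicity of $\psi$ gives $H_{0,\sigma,k}\subseteq H_{0,\sigma,k-t}$) and invoking ergodicity, and then translates this full-measure set by a fixed backward time $s_0\log b$ chosen so that the detected holonomy vectors have length at most exactly $b^{\rho_m}$; the relevant radius is then $b^{\rho_m}$ itself, yielding $\liminf_R \psi(R)R^2\zeta\le \sigma b^{2s_0}$ on a full-measure set, after which $\sigma\to 0$. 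You instead (i) make the quantity $L(\omega)=\liminf_R\psi(R)R^2\zeta_\omega(R)$ a.e.\ constant via the sub-invariance $L\circ g_t\ge L$ plus ergodicity, so a bound on a positive-measure set suffices, and (ii) handle the overshoot $R_k=\sqrt2\,b^k>b^k$ not by flowing back but by pruning to the indices $K^{(\tau)}$ where $\psi(b^{k+1})\le\tau\psi(b^k)$, killing the complementary $\limsup$ with the upper bound of \cref{lem:measurebounds} and the Borel--Cantelli lemma (since $\psi(b^k)$ grows geometrically off $K^{(\tau)}$). Both devices are sound, and the $K^{(\tau)}$ pruning is a nice alternative to the paper's $s_0$-translation, at the cost of the ergodic-theoretic lemma about $L$, which the paper's route avoids entirely. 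Two small points to tighten: the displayed claim ``$\zeta_{g_t\omega}(S)=e^{-2t}\zeta_\omega(Se^{-t})(1+o(1))$'' is stronger and less precise than what you need; what your argument actually delivers (and all you use) is the one-sided bound $\zeta_{g_t\omega}(S)\ge (1+o(1))\,e^{-2t}\zeta_\omega(R_S)$ with $R_S\le e^{-t}S(1+o(1))$, obtained by pulling back the two extremizers of $\Theta_{g_t\omega}(S)$, which are nearly horizontal for large $S$ by density of directions; monotonicity of $\zeta_\omega(\cdot)$ in the radius and of $\psi$ then gives $L(g_t\omega)\ge L(\omega)$, and the standard set-level argument plus ergodicity finishes the constancy step. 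Also, in your bound $\zeta_\omega(R_k)\le 2\sigma b^{-2k}/\psi(b^k)$ (as in the paper's analogous step) one should first discard the null set of surfaces having a horizontal saddle connection, so that the vector detected in $T^-_{0,\sigma,k}$ genuinely has negative argument; this costs nothing for an almost-everywhere statement.
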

\begin{rmk} \label{rmk:propdiv}
	Note that $A_k(0, \sigma) \subseteq A_k(0, \sigma')$ whenever $\sigma < \sigma'$. Thus the assumption of Proposition~\ref{prop:div improvement} is satisfied as long as $\mu(\limsup A_k(0,\sigma)) >0$ for all $\sigma$ small enough. 
\end{rmk} 
\begin{proof}
Fix $\sigma$ and write $A_k$ for $A_k(0,\sigma)$. We first claim $\mu(\limsup A_k) = 1$. We will show $\limsup A_k$ is invariant under forward geodesic flow $g_{\log(b^t)}$ for $t>0$. Let $$\omega \in g_{\log(b^{-t})} \limsup_{k\to\infty} g_{\log(b^k)} H_{0,\sigma, k}.$$ For $n\in \mathbb{N}$, there exists $k\geq n+t$ so that the monotonicity of $\psi$ implies
$$\omega \in g_{\log(b^{-t+k})} H_{0,\sigma,k} \subseteq g_{\log(b^{-t+k})} H_{0,\sigma, -t+k}.$$ Hence $\omega \in \limsup A_k$. By ergodicity of the geodesic flow and the assumption that $\mu(\limsup A_k) >0$, we conclude that $\mu(\limsup A_k) = 1$.

We now translate $\limsup A_k$ 
and claim that for $s_0=\left\lceil \frac{\log(2)}{2\log(b)} \right\rceil >0$, any $\widetilde{\omega} \in g_{-s_0\log(b)} \limsup A_k$ satisfies
$$\liminf_{R\to\infty}\psi(R) R^2 \zeta_{\widetilde{\omega}}(R) = 0.$$
First note that we are still working with a full measure set since $\mu$ is invariant under geodesic flow
$$\mu(g_{-s_0 \log(b)} \limsup A_k) = \mu(\limsup A_k) = 1.$$
Let $\tilde{\omega} = g_{-s_0 \log(b)} \omega$ for some $\omega \in \limsup A_k$. Since $\omega \in \limsup A_k$, for any $m\in \N$ we can find $\rho_m \geq m$ so that $\omega \in g_{\rho_m \log(b)} H_{0,\sigma, \rho_m}.$ The choice of $s_0$ guarantees that the longest possible holonomy vectors in $g_{-s_0 \log(b)} T^\pm_{0, \sigma, b^{\rho_m}}$ are at most $b^{\rho_m}$ since the choice of $s_0$ is sufficient so that
$$b^{2(\rho_m-s_0)} + \frac{\sigma^2}{b^{2(\rho_m-s_0)} \psi(b^{\rho_m})^2} \leq b^{2\rho_m}.$$ 
Thus the holonomy vectors detected by $g_{-s_0 \log(b)} T^\pm_{0, \sigma, b^{\rho_m}}$ have length at most $b^{\rho_m}$ and an upper bound on the angle around zero, giving the following upper bound 
$$\zeta_{\tilde{\omega}} (b^{\rho_m}) \leq \arctan \frac{\sigma}{\psi(b^{\rho_m}) b^{2\rho_m - 2s_0}} < \frac{\sigma b^{2s_0}}{\psi(b^{\rho_m}) b^{2\rho_m}}.$$
Since $s_0$ is fixed, we can take $\sigma \to 0$ to obtain $\liminf_{R\to\infty}  \psi(R) R^2\zeta_\omega(R) = 0$.\qed
\end{proof} 

\subsection{Axiomatic framework} \label{sec:axiom}

We will first recall the Borel--Cantelli lemma, and then spend the remainder of the section stating and proving a partial converse.

\begin{lem}[Borel--Cantelli lemma]\label{lem:BC}
	Suppose $(A_k)_{k=1}^\infty$ are measurable sets with $\sum_{k=1}^\infty \mu(A_k) < \infty$. Then $\mu( \limsup A_i)=0.$
\end{lem}
\begin{prop}[Exponential decay converse to Borel--Cantelli] \label{prop:Exp_Decay_Borel_Cantelli}
Let $C\geq 1$, $0<\delta < 1$, and $(A_k)_{k=1}^\infty$, {$(B_k)_{k=1}^\infty$, $(C_k)_{k=1}^\infty$} be measurable sets. Suppose the following hold.
\begin{enumerate}
\item \label{enum:1}$\sum_{k=1}^\infty \mu(A_k)=\infty$.
\item \label{enum:2}For all $i \leq j$, $\mu(A_i)\geq \mu(A_j).$
\item \label{enum:3}For all $i$, for all $j$ so that $j>i+ C \log\left(\frac 1 {\mu(A_i)}\right)$ we have 
$$\mu(A_i\cap A_j)\leq C\mu(A_i)\left[\mu(A_j)+e^{-\frac{\delta}{4} |i-j|}\right].$$
\item \label{enum:4} For all $i$, for all $j$ so that $i< j\leq i+C\log\left(\frac 1 {\mu(A_i)}\right)$
\begin{enumerate}
\item \label{enum:4(a)}$B_i\subset A_i$ and $A_j \subset C_j$,
\item \label{enum:4(b)}$\mu(B_i)>\frac 1 C \mu(A_i)$,
\item \label{enum:4(c)}$\mu(C_j)< C \mu(A_j)^{\frac 1 2}$,
\item \label{enum:4(d)}$\mu(B_i \cap C_j)<C \mu(B_i)\left(2^{-(j-i)(1-\delta)} +\mu(C_j)^\frac{1+\delta}{2}\right)$.
\end{enumerate}
\end{enumerate}
Then $\limsup A_i$ has positive measure.
\end{prop}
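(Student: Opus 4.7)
My plan is to apply the Kochen--Stone (second-moment) converse to Borel--Cantelli to the \emph{smaller} sequence $(B_i)$ supplied by hypothesis (4). Since $B_i\subset A_i$ gives $\limsup B_i\subset\limsup A_i$, it is enough to show $\mu(\limsup B_i)>0$. Kochen--Stone gives
$$\mu(\limsup B_i)\ \geq\ \limsup_{N\to\infty}\ \frac{\bigl(\sum_{i\leq N}\mu(B_i)\bigr)^{2}}{\sum_{i,j\leq N}\mu(B_i\cap B_j)}.$$
Setting $S_N:=\sum_{i\leq N}\mu(A_i)$, hypothesis (1) gives $S_N\to\infty$ and (4b) gives $\sum_{i\leq N}\mu(B_i)\geq C^{-1}S_N$, so the task reduces to bounding the denominator above by a fixed multiple of $S_N^{2}$ for all sufficiently large $N$.

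I would split $\sum_{i,j\leq N}\mu(B_i\cap B_j)$, guided by the dichotomy in the hypotheses, into the diagonal ($i=j$), ``far'' pairs with $|j-i|>C\log(1/\mu(A_i))$, and ``close'' pairs with $0<|j-i|\leq C\log(1/\mu(A_i))$. The diagonal is immediate, bounded by $S_N$. For far pairs one has $\mu(B_i\cap B_j)\leq\mu(A_i\cap A_j)$, so (3) supplies a product term summing to $\leq CS_N^{2}$ plus an exponential term $C\mu(A_i)e^{-\delta|j-i|/4}$ summing to $O(S_N)$ by a geometric series in $|j-i|$.

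For close pairs I would use the nesting $B_j\subset A_j\subset C_j$ to write $\mu(B_i\cap B_j)\leq\mu(B_i\cap C_j)$ and invoke (4d). The $2^{-(j-i)(1-\delta)}$ summand contributes $\sum_i\mu(B_i)\sum_{j>i}2^{-(j-i)(1-\delta)}=O(S_N)$, again a geometric series since $1-\delta>0$. The hard part will be the remaining piece $\sum_i\mu(B_i)\sum_{\text{close }j>i}\mu(C_j)^{(1+\delta)/2}$: the close range has length on the order of $\log(1/\mu(A_i))$, and (4c) only bounds $\mu(C_j)$ by $\mu(A_j)^{1/2}$ rather than by $\mu(A_j)$. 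I would handle it by combining (4c) with the monotonicity (2) to obtain $\mu(C_j)^{(1+\delta)/2}\leq C^{(1+\delta)/2}\mu(A_i)^{(1+\delta)/4}$ throughout the close range of $i$, so that this piece is controlled by a constant times $\sum_i\mu(A_i)^{1+(1+\delta)/4}\log(1/\mu(A_i))$. Since $x\mapsto x^{(1+\delta)/4}\log(1/x)$ is bounded on $(0,1]$ (its exponent being strictly positive), this sum is in turn $O(S_N)$.

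Putting the four estimates together yields $\sum_{i,j\leq N}\mu(B_i\cap B_j)\leq CS_N^{2}+O(S_N)\leq 2CS_N^{2}$ for all $N$ large enough, and the Kochen--Stone ratio is then bounded below by $(C^{-1}S_N)^{2}/(2CS_N^{2})=1/(2C^{3})$. Hence $\mu(\limsup A_i)\geq\mu(\limsup B_i)\geq 1/(2C^{3})>0$, as desired.
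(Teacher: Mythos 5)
Your proof is correct and follows essentially the same route as the paper: the paper applies the Chung--Erd\H{o}s inequality to the smaller sets $B_i$ and passes to the $\limsup$ via nested tail unions, which is exactly what the Kochen--Stone theorem packages in one statement. The decomposition into diagonal, ``far'' pairs (handled by hypothesis (3) together with a geometric series), and ``close'' pairs (handled by $\mu(B_i\cap B_j)\leq \mu(B_i\cap C_j)$, hypothesis (4d), and a geometric series) is the same as the paper's. One small stylistic difference: to control the sum $\sum_i \mu(A_i)^{1+(1+\delta)/4}\log(1/\mu(A_i))$, the paper splits at a threshold index $n_0$ past which $\log(1/\mu(B_i))\leq\mu(B_i)^{-(1+\delta)/4}$ and treats the initial segment as a bounded additive constant, whereas you simply observe that $x\mapsto x^{(1+\delta)/4}\log(1/x)$ is uniformly bounded on $(0,1]$; your observation is a slightly cleaner way to arrive at the same $O(S_N)$ bound. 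Both arguments agree on the substantive points -- reducing to $B_i$, splitting at the scale $C\log(1/\mu(A_i))$, and using (4c) with monotonicity to turn $\mu(C_j)^{(1+\delta)/2}$ into a power of $\mu(A_i)$ that absorbs the $\log(1/\mu(A_i))$ length of the close range.
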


\begin{rmk}\label{rmk:technical} One can observe the decay of correlations in Assumption~\eqref{enum:3} is insufficient to handle the generality of Theorem \ref{thm:01law}. Indeed consider $\psi(t)=\sqrt{\log(t+4)\log(\log(t+4))}$. Since $\int_1^\infty \frac 1 {t\psi(t)^2} \,dt=\infty$, $\psi$ satisfies the second assumption of Theorem~\ref{thm:01law}. Motivated by Lemma~\ref{lem:measurebounds} we assume $\mu(A_k)$ is proportional to $\psi(b^{k})^{-2} = \frac{1}{\log(b^k +4) \log(\log(b^k+4))} < \frac{1}{k \log(k) \log(b)}$. {If $C'>0$ is small enough, }assumption~\eqref{enum:3} gives no bounds on $\mu(A_i\cap A_j)$ when $i<j<C'\log(i)+i$. Define $n_k$ recursively by $n_1=2$ and $n_{i+1}=\lceil n_i+C'\log(n_i)\rceil$. Observe
$\sum_{i=1}^\infty \mu(A_{n_i})<\infty$, so $\mu(\limsup A_{n_k}) =0$ by the Borel--Cantelli lemma. Thus we cannot draw any conclusions on $\mu(\limsup A_k)$ without Assumption~\eqref{enum:4}. 
\end{rmk}

We want to prove Proposition~\ref{prop:Exp_Decay_Borel_Cantelli}. This proof is inspired by \cite[Theorem~2.1]{Petrov02}, which invokes the Chung--Erd\H{o}s inequality.

\begin{lem}[Chung--Erd\H{o}s inequality \cite{ChungErdos}] \label{lem:CEI}
	Suppose$(A_k)_{k=1}^\infty$ is a sequence of measurable sets with $\mu\left(\bigcup_{k=1}^N A_k\right) > 0$, then
	\begin{equation}\label{eq:CEinequality}\mu\left(\bigcup_{k=1}^N A_k\right) \geq \frac{\left(\sum_{k=1}^N \mu(A_k)\right)^2}{\sum_{j,k = 1}^n \mu(A_j\cap A_k)}.
	\end{equation}
\end{lem}

The next lemma will also be used to prove Proposition~\ref{prop:Exp_Decay_Borel_Cantelli}, which states the conditions on the sets $B_k \subseteq A_k$ that we use to show $\limsup(B_k) > 0$, and thus $\limsup(A_k) > 0$.
\begin{lem} \label{lem:EBC_Bs}
	{We retain the notation of Proposition~\ref{prop:Exp_Decay_Borel_Cantelli}}.
There exists some $\tilde{C}\geq 1$ depending only on the constant $C$ {as in Proposition~\ref{prop:Exp_Decay_Borel_Cantelli}} large enough so that if $\tilde{m}_i = i + \tilde{C} + \tilde{C}\log\left(\frac{1}{\mu(B_i)}\right)$, the following hold.
	\begin{enumerate}[label=(\alph*)]
		\item \label{enum:1'} $\sum_{k=1}^\infty \mu(B_k) = \infty.$ 
		\item \label{enum:2'}For all $i \leq j$, $\mu(B_i) \geq \frac{1}{C}\mu(B_j).$
		\item \label{enum:3'}   For all $i$ and for all $j$ so that $j > \tilde{m}_i$,
		$$\mu(B_i\cap B_j) \leq \tilde{C}\mu(B_i)\left[\mu(B_j) + e^{-\frac{\delta}{4}|i-j|}\right].$$
		\item \label{enum:4'} For all $i$ and for all $j$ with $i< j < \tilde{m}_i$,
		$$\mu(B_i\cap B_j) < \tilde{C} \mu(B_i)\left[2^{-|i-j|(1-\delta)} + \mu(B_j)^\frac{1+\delta}{4}\right].$$
	\end{enumerate}
	Moreover there exists constants $D, D'>0$ so that for any $n>0$ and $N\geq n$,
	\begin{equation}\label{eq:upperbounden}	
		\sum_{i,j=n}^N \mu(B_i\cap B_j) \leq \tilde{C}\left[D\sum_{k=n}^N \mu(B_k) + D' + \left(\sum_{k=n}^N \mu(B_k)\right)^2\right].
	\end{equation}
\end{lem}
\begin{theopargself}\begin{proof}[of Lemma~\ref{lem:EBC_Bs}.] The first 4 parts use assumptions on $A_k$ in Proposition~\ref{prop:Exp_Decay_Borel_Cantelli}.
	\begin{enumerate}[label=(\alph*)]
		\item Follows from Assumptions~\eqref{enum:1} and \eqref{enum:4(b)}.
		\item Follows from Assumptions~\eqref{enum:2} and \eqref{enum:4(b)}.
		\item Combine Assumption~\eqref{enum:3} with Assumptions~\eqref{enum:4(a)} and \eqref{enum:4(b)}.
		\item Combine Assumptions~\eqref{enum:4(b)} and \eqref{enum:4(c)} with that fact that $\frac{1+\delta}{4} \in (\frac{1}{4}, \frac{1}{2})$ and $C\geq 1$ to obtain
$$\mu(C_j)^\frac{1+\delta}{2} \leq C \mu(A_j)^\frac{1+\delta}{4} \leq C(C\mu(B_j))^\frac{1+\delta}{4} \leq \tilde{C}^2\mu(B_j)^\frac{1+\delta}{4}.$$
\end{enumerate}

Now we move to Equation~\eqref{eq:upperbounden} where we want to find an upper bound for the denominator on the right hand side of Equation~\eqref{eq:CEinequality} applied to the sets $B_k$. First since $\tilde{C} \geq 1$,
\begin{equation} \label{eq:EBC1}
	\sum_{i,j=n}^N \mu(B_i\cap B_j) \leq  2\sum_{i=n}^N \sum_{j>i} \mu(B_i\cap B_j) + \tilde{C} \sum_{k=n}^N \mu(B_k).\\
\end{equation}

We split the double sum on the right hand side of Equation~\eqref{eq:EBC1} into cases when $j > \tilde{m}_i$ and $j\leq \tilde{m}_i$. 

In the first case when $j > \tilde{m}_i$, part \ref{enum:3'} implies
\begin{equation}\label{eq:EBCfar1}
	2\sum_{i=n}^N \sum_{j>\tilde{m}_i} \mu(B_i\cap B_j)  \leq 2\tilde{C} \sum_{i=n}^N \sum_{j>\tilde{m}_i}\left(\mu(B_i)\mu(B_j) + \mu(B_i) e^{-\frac{\delta}{4}|i-j|}\right).
\end{equation}
In the first sum of Equation~\eqref{eq:EBCfar1}, we re-expand the square so that
\begin{equation}\label{eq:EBCfar2}
2\sum_{i=n}^N \sum_{j> \tilde{m}_i}^N \mu(B_i)\mu(B_j) \leq  \left(\sum_{i=n}^N \mu(B_i)\right)^2 - \sum_{i=n}^N \mu(B_i)^2 \leq \left(\sum_{i=n}^N \mu(B_i)\right)^2.
\end{equation}
In the second sum of Equation~\eqref{eq:EBCfar1} we making the change of variables $k= j-i$, the geometric series formula gives
\begin{equation}\label{eq:EBCfar3}
2\sum_{i=n}^N \sum_{j>\tilde{m}_i}^N \mu(B_i) e^{-\frac{\delta}{4}|i-j|} \leq 2\sum_{i=n}^N \mu(B_i) \frac{1}{1-e^{-\frac{\delta}{4}}}.
\end{equation}
Combining Equation~\eqref{eq:EBCfar1}, Equation~\eqref{eq:EBCfar2}, and Equation~\eqref{eq:EBCfar3}, we obtain
\begin{equation} \label{eq:EBC2}
	2\sum_{i=n}^N \sum_{j>\tilde{m}_i} \mu(B_i\cap B_j) \leq \tilde{C}\left[\left(\sum_{k=n}^N \mu(B_k)\right)^2 + \frac{2}{1- e^{-\frac{\delta}{4}}} \sum_{k=1}^N \mu(B_k)\right].
\end{equation}

We now consider the case for $i< j \leq \tilde{m}_i$. By \ref{enum:4'}
\begin{align}\nonumber
	&2\sum_{i=n}^N \sum_{i<j\leq \tilde{m}_i}\mu(B_i\cap B_j) \\
	&\leq 2\tilde{C} \sum_{i=n}^N \sum_{i<j\leq \tilde{m}_i}\left(\mu(B_i)2^{-|i-j|(1-\delta)} + \mu(B_i) \mu(B_j)^{\frac{1+\delta}{4}}\right).\label{eq:EBCnear1}
\end{align}
We again use a geometric series formula so that the first sum is bounded by
\begin{equation}\label{eq:EBCnear2}
	2\sum_{i=n}^N  \sum_{i<j\leq \tilde{m}_i}\mu(B_i)2^{-|i-j|(1-\delta)}  \leq \frac{2}{1-2^{-(1-\delta)}} \sum_{i=n}^N \mu(B_i).
\end{equation}

Note $\frac{5+\delta}{4} \in \left(\frac{5}{4}, \frac{6}{4}\right)$ so $\frac{5+\delta}{4}$ is a power bigger than 1 with $\mu(B_i)\leq 1$. Therefore $\mu(B_i)^\frac{5+\delta}{4} \leq \mu(B_i).$ Combining this fact with \ref{enum:2'},
\begin{align}
&2\sum_{i=n}^N \sum_{i<j\leq \tilde{m}_i} \mu(B_i)\mu(B_j)^\frac{1+\delta}{4} \nonumber\\
&\leq 2C\sum_{i=n}^N \mu(B_i)^\frac{5+\delta}{4} (\tilde{m}_i-i) \nonumber \\ 
&\leq 2C\tilde{C} \sum_{i=n}^N \mu(B_i) + 2C\tilde{C} \sum_{i=n}^N \mu(B_i)^\frac{5+\delta}{4} \log\left(\frac{1}{\mu(B_i)}\right). \label{eq:EBCnear3}
\end{align}
Now choose $n_0$ large enough so that for all $i\geq n_0$, $$\log\left(\frac{1}{\mu(B_i)}\right) \leq \mu(B_i)^\frac{-1-\delta}{4}.$$
Then if $n\geq n_0$, 
$$\sum_{i=n}^N \mu(B_i)^\frac{5+\delta}{4}\log\left(\frac{1}{\mu(B_i)}\right) \leq \sum_{i=n}^N \mu(B_i).$$
Otherwise if $n\leq n_0$
\begin{align*}
\sum_{i=n}^N \mu(B_i)^\frac{5+\delta}{4}\log\left(\frac{1}{\mu(B_i)}\right) &\leq \sum_{i=n}^{n_0-1} \mu(B_i) \log\left(\frac{1}{\mu(B_i)}\right) + \sum_{i=n_0}^N \mu(B_i)\\
& \leq C' + \sum_{i=n}^N \mu(B_i).
\end{align*}
where $C' >0$ is the bound for the finite sum.
Therefore there is a constant $C'$ so that
\begin{equation} \label{eq:EBCnear4}
\sum_{i=n}^N \mu(B_i)^\frac{5+\delta}{4} \log\left(\frac{1}{\mu(B_i)}\right)\leq C' + \sum_{i=n}^N \mu(B_i).
\end{equation}
Thus we conclude by combining Equation~\eqref{eq:EBCnear1}, Equation~\eqref{eq:EBCnear2}, Equation~\eqref{eq:EBCnear3}, and Equation~\eqref{eq:EBCnear4} so that
\begin{equation}\label{eq:EBC3}
	2\sum_{i=n}^N \sum_{i<j\leq \tilde{m}_i} \mu(B_i \cap B_j) \leq \tilde{C}\left[2CC'\tilde{C}+\left(\frac{2}{1-2^{-(1-\delta)}} + 4C \tilde{C}\right) \sum_{i=n}^N \mu(B_i) \right].
\end{equation}
The proof of Equation~\eqref{eq:upperbounden} is completed by combining Equation~\eqref{eq:EBC1}, Equation~\eqref{eq:EBC2}, and Equation~\eqref{eq:EBC3} where $D' = 2C\tilde{C}C'$ and $$D= 1+\frac{2}{1-e^{-\frac{\delta}{4}}} + \frac{2}{1-2^{-(1-\delta)}} + 4C\tilde{C}.$$\qed
\end{proof}\end{theopargself}
\begin{theopargself}
\begin{proof}[of Proposition~\ref{prop:Exp_Decay_Borel_Cantelli}.]
	It suffices show that the measure of $\limsup B_k$ has positive measure. By Chung--Erd\H{o}s inequality (Lemma~\ref{lem:CEI}), Lemma~\ref{lem:EBC_Bs}~\ref{enum:1'}, and Equation~\eqref{eq:upperbounden},
\begin{align*}
	\liminf_{N\to\infty}\mu\left(\bigcup_{k=n}^N B_k\right) &\geq \liminf_{N\to\infty}\frac{\left(\sum_{k=n}^N \mu(B_k)\right)^2}{\tilde{C} \left[D\sum_{k=n}^N\mu(B_k) + D' + \left(\sum_{k=n}^N \mu(B_k)\right)^2\right]}\\
	&= \liminf_{N\to\infty}\frac{1}{\tilde{C}\left[ \frac{D}{\sum_{k=n}^N \mu(B_k)} + \frac{D'}{\left(\sum_{k=n}^N \mu(B_k)\right)^2} + 1\right]}= \frac{1}{\tilde{C}}.
\end{align*}
Hence $\mu\left(\bigcup_{k=n}^\infty B_k\right) \geq \frac{1}{\tilde{C}}.$ Notice $\bigcup_{k=n}^\infty B_k$ is a nested decreasing sequence of sets, so we conclude
$$\mu(\limsup B_n) = \mu\left(\bigcap_{n=1}^\infty\bigcup_{k=n}^\infty B_k\right) = \lim_{n\to\infty} \mu\left(\bigcup_{k=n}^\infty B_k\right)\geq \frac{1}{\tilde{C}} > 0.$$\qed \end{proof}\end{theopargself}

\subsection{Proof of Theorem \ref{thm:01law}} \label{sec:proof01law}
We now prove Theorem \ref{thm:01law} conditional on the next proposition, whose proof is completed in Section~\ref{sec:ExpDecayFar} and Section~\ref{sec:averages}.

\begin{prop}\label{prop:sufficient is satisfied}
For some $c>0$ and for arbitrarily small $\sigma >0$, there exist sets $B_k$, $C_k$ so that along with the sets $A_k = A_k(c,\sigma)$ as defined in Definition~\ref{def:As}, the assumptions of Proposition~\ref{prop:Exp_Decay_Borel_Cantelli} are satisfied.
\end{prop}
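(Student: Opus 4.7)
The plan is to take $c = c_\mathcal{H}$ and $\sigma < \sigma_\mathcal{H}$ from \cref{lem:measurebounds} and verify each of the four hypotheses of \cref{prop:Exp_Decay_Borel_Cantelli} for the family $A_k = A_k(c_\mathcal{H}, \sigma)$, while constructing the auxiliary sets $B_k \subset A_k$ and $C_k \supset A_k$ needed for hypothesis \eqref{enum:4} along the way.

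Hypotheses \eqref{enum:1} and \eqref{enum:2} are essentially immediate. The lower bound $\mu(A_k) = \mu(H_{c_\mathcal{H},\sigma,k}) \geq m\sigma^2 \psi(b^k)^{-2}$ from \cref{lem:measurebounds}, combined with \cref{cor:psidivergence} applied to the divergence hypothesis of \cref{thm:01law}(2), gives $\sum_k \mu(A_k) = \infty$. Monotonicity $\mu(A_i) \geq \mu(A_j)$ for $i \leq j$ follows from monotonicity of $\psi$ together with the upper and lower bounds of \cref{lem:measurebounds}.

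For the far-correlation hypothesis \eqref{enum:3}, I would use $g_t$-invariance of $\mu$ to rewrite $\mu(A_i \cap A_j) = \mu(H_{c_\mathcal{H},\sigma,i} \cap g_{\log(b^{j-i})} H_{c_\mathcal{H},\sigma,j})$, smooth the indicator functions of these sets, and invoke the exponential mixing of Teichm\"uller geodesic flow established in \cref{sec:ExpDecayFar}. The Sobolev norms of smooth bump functions adapted to the trapezoidal sets $H_{c_\mathcal{H},\sigma,k}$ grow polynomially in $\psi(b^k)$, and hence polynomially in $1/\mu(A_k)$; the assumption $j - i > C\log(1/\mu(A_i))$ is precisely the regime in which these norms are absorbed into the exponential decay factor $e^{-\delta(j-i)/4}$, yielding the bound required by \eqref{enum:3}.

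The principal obstacle is hypothesis \eqref{enum:4}. As emphasized in \cref{rmk:technical}, when $j - i$ is only logarithmic in $1/\mu(A_i)$, exponential mixing cannot overcome the blow-up of Sobolev norms coming from the anisotropic trapezoids $T^\pm_{c_\mathcal{H},\sigma,j}$, and a different argument is needed. The enlarged set $C_j$ would be defined by relaxing the pair condition in $H_{c_\mathcal{H},\sigma,j}$ to the presence of a single short saddle connection in a suitable enlarged trapezoid; by \cref{lem:MasurSmillie} this immediately gives $\mu(C_j) \leq C\mu(A_j)^{1/2}$, verifying \eqref{enum:4(c)}. The set $B_i$ would be obtained by a mild shrinking of the defining trapezoid chosen so that the pair of short holonomy vectors is robust under small perturbations, which preserves a definite fraction of $\mu(A_i)$ as required by \eqref{enum:4(b)}. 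The critical correlation bound \eqref{enum:4(d)} is then to be established in \cref{sec:averages}, by an averaging argument that controls, in expectation along short Teichm\"uller trajectories, the number of short saddle connections appearing at scale $b^j$ given a pair already present at scale $b^i$; the key quantitative input is \cref{cor:AverageUpperBound}. The hardest step is making this averaging uniform across the entire near range $i < j \leq i + C\log(1/\mu(A_i))$, which requires separating genuine short-orbit recurrences (where the short saddle connections at scales $b^i$ and $b^j$ come from the same geometric configuration) from independent appearances. Once \eqref{enum:4(d)} is secured all four hypotheses are satisfied, and \cref{prop:Exp_Decay_Borel_Cantelli} applied to $A_k$, $B_k$, $C_k$ completes the proof.
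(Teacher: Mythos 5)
Your high-level plan matches the paper's architecture: use \cref{lem:measurebounds} and \cref{cor:psidivergence} for (1)--(2), exponential mixing with bump functions for the far-range hypothesis (3), and a separate averaging argument for the near-range hypothesis (4), with $C_j$ defined by a single short saddle connection (so \cref{lem:MasurSmillie} gives the square-root bound). However, your description of $B_i$ and the mechanism behind \eqref{enum:4(d)} miss the key idea.

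You describe $B_i$ as ``a mild shrinking of the defining trapezoid.'' In the paper $B_i$ is not a shrunken trapezoid: after pulling back by geodesic flow so that the trapezoid is at unit scale, the set $W_i$ is cut out by \emph{angular sector} conditions on the two holonomy vectors (with $\arg(v_1)\in(\pi/12,\pi/6)$, $\arg(v_2)\in(-\pi/6,-\pi/12)$), and $B_i$ is then the union $\bigcup_{\theta\in I} r_\theta W_i$ over the interval $I=(-\pi/12,\pi/12)$, pushed forward. The point of this construction is that $B_i$ is by design a union of $SO(2)$-rotates, so that $\mu(B_i\cap C_j)$ can be bounded, after applying Markov's inequality to the logsmooth function $V_\delta^{(\tau)}$, by a disintegrated integral of the form $\int \int_I V_\delta^{(\tau)}(g_{f(i,j)} r_\theta \tilde\omega)\,d\theta\,d\tilde\mu$. \cref{cor:AverageUpperBound} then gives exponential decay in $f(i,j)$ for the inner circle average. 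Your phrase ``in expectation along short Teichm\"uller trajectories'' gets this wrong: the averaging is over rotation angle $\theta$ at a \emph{fixed} flow time $f(i,j)$, not along geodesic trajectories. Finally, your proposed ``hardest step,'' namely separating ``genuine short-orbit recurrences from independent appearances,'' does not appear in the argument at all; uniformity over the near range $i<j\le i+C\log(1/\mu(A_i))$ is automatic because on the rescaled set $W_i$ the shortest saddle connection has length bounded away from zero, so $V_\delta^{(\tau)}$ is uniformly bounded there by \eqref{eq:v_deltacomparabletoalpha1}, and the decay factor $e^{-(1-\delta)f(i,j)}$ from \cref{cor:AverageUpperBound} supplies the required bound directly. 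Without the rotation-orbit structure of $B_i$, the circle-average estimate cannot be invoked, so this is a genuine gap in your plan, not merely a presentational difference.
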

Before outlining how to prove Proposition~\ref{prop:sufficient is satisfied}, we first show that it is sufficient to obtain Theorem~\ref{thm:01law}.

\begin{theopargself}\begin{proof}[of Theorem~\ref{thm:01law}.]
We first prove the convergence case. By Corollary~\ref{cor:psidivergence}, $\sum_{j=1}^\infty \psi(e^j)^{-2} < \infty$, and thus $\lim_{j\to\infty} \psi(e^j) = \infty$. Set
$$L_k = \{\omega: \exists \,R \in [e^k,e^{k+1}] \text{ so that }R^2\zeta_\omega(R)< \psi(e^k)^{-1}\}.$$ For $k$ large enough, if $\zeta_\omega(R) <  \frac{\psi(e^k)^{-1}}{R^2}$, then there are two saddle connections on $\omega$ with horizontal holonomy at most $R$ and vertical holonomy of magnitude at most $\frac{\psi(e^k)^{-1}}{R}$. It follows that for $\omega \in L_k$, $g_{-\log\left(e^k \sqrt{ \psi(e^k)}\right) } \omega$ has two saddle connections of length at most $\psi(e^k)^{-1/2} e$. By Lemma~\ref{lem:MasurSmillie}, there is some constant $C'$ so that $\mu(L_k) \leq C'  \psi(e^k)^{-2}$ so that $\sum_{k=1}^\infty \mu(L_k) < \infty$. By the Borel--Cantelli lemma (Lemma~\ref{lem:BC}), $\mu(\limsup L_k) = 0$. Taking the complement, we have a full measure set of $\omega$ so that for all $k \geq k_0$ (where $k_0$ depends on $\omega$), and for all $R \in [e^k, e^{k+1}]$, 
$$\psi(R)R^2 \zeta_\omega(R) \geq \psi(e^k) R^2 \zeta_\omega(R) \geq 1.$$ Therefore
\begin{align*} \mu\left(\{\omega: \liminf_{R\to\infty} \psi(R) R^2 \zeta_\omega(R) > 0\}\right)
\geq \mu\left(\{\omega: \liminf_{R\to\infty} \psi(R) R^2 \zeta_\omega(R) \geq 1\}\right) = 1.\end{align*}
By Lemma~\ref{lem:convergence reduction}, the convergence case of Theorem~\ref{thm:01law} is verified.

We now prove the divergence case. By Corollary~\ref{cor:psidivergence}, $\sum_{k=1}^\infty\psi(b^k)^{-2}=\infty$. By Lemma~\ref{lem:measurebounds} for any $\sigma <\sigma_{\mathcal{H}}$ our set $A_k$ has measure proportional to $\psi(b^k)^{-2}$. By Proposition~\ref{prop:Exp_Decay_Borel_Cantelli} and Proposition~\ref{prop:sufficient is satisfied}, for a positive measure set of $\omega$ we have $g_t\omega \in A_k$ for infinitely many $k$. Following Remark~\ref{rmk:propdiv} we satisfy the assumptions of Proposition \ref{prop:div improvement}, which implies the divergence case of Theorem~\ref{thm:01law}. \qed
\end{proof}\end{theopargself}

\begin{theopargself} \begin{proof}[outline of Proposition~\ref{prop:sufficient is satisfied}.]

	We verify or state where each of the assumptions of Proposition~\ref{prop:Exp_Decay_Borel_Cantelli} are verified.
\begin{enumerate}
	\item Assumption~\eqref{enum:1} follows by the measure bounds of Lemma~\ref{lem:measurebounds}, Corollary~\ref{cor:psidivergence} and the fact that $g_t$ preserves measure: 
			$$\sum_{k=1}^\infty \mu(A_k) \geq \sum_{k=1}^\infty m \frac{\sigma^2}{\psi(b^k)^2} = \infty.$$
	\item Assumption~\eqref{enum:2} follows by Lemma~\ref{lem:measurebounds}: $i \leq j$ implies $\psi(b^i) \leq \psi(b^j)$, so 
			$$\mu(A_i) = \frac{m\sigma^2}{\psi(b^i)^2} \geq \frac{m\sigma^2}{\psi(b^j)^2} = \mu(A_j).$$
			\item Assumption~\eqref{enum:3} is proved in Section~\ref{sec:(3)} using Corollary~\ref{cor:AGY_for_rho}, Lemma~\ref{lem:intersection less than rho product}, and Lemma~\ref{lem:productrhos_sumE_i}.
			\item Assumption~\eqref{enum:4} is proved as follows. The construction of the $B_k$ and $C_k$ sets along with proofs of Assumptions~\eqref{enum:4(a)}, \eqref{enum:4(b)}, and \eqref{enum:4(c)} are given in Section~\ref{subsec:4a-c}. The proof is completed by verifying Assumption~\eqref{enum:4(d)} in Section~\ref{subsec:4d}.
\end{enumerate}\qed
\end{proof}\end{theopargself}
	
\section{Verifying Proposition~\ref{prop:Exp_Decay_Borel_Cantelli} Assumption (3): exponential decay of correlations for far away pairs}\label{sec:ExpDecayFar}

We begin by stating our key exponential mixing result:

\begin{thm}[Stated from \cite{athreya2012right} Theorem C.4, see \cite{avila2006exponential}] \label{thm:AGY}
	Fix $\mathcal{H}$ a connected component of the stratum and let $\mu$ be MSV measure as above. There exist $C>0$ and $\delta > 0$ so that for all $h_1$, $h_2$ Lipschitz and compactly supported in a compact set $K$, there exists a constant $C_K$ depending only on the compact set $K$ so that for all $t\geq 0$
	\begin{align*}&\abs{\int h_1 (h_2\circ g_t) \,d\mu - \int h_1 \,d\mu \int h_2\,d\mu }\\ 
	&\leq C(C_K + \norm{h_1}_\infty + \norm{h_2}_{Lip}) (C_K + \norm{h_2}_\infty + \norm{h_2}_{Lip}) e^{-\delta t}.\end{align*}
\end{thm}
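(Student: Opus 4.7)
The plan is to prove exponential decay of correlations for $g_t$ by reducing to a spectral estimate for a twisted transfer operator on a symbolic model of the flow, following the Avila--Gou\"ezel--Yoccoz (AGY) framework for \emph{excellent hyperbolic semi-flows}. First I would construct a Poincar\'e section $\Sigma \subset \mathcal{H}$ via Rauzy--Veech or Veech zippered-rectangle renormalization, obtaining a return map $T\colon \Sigma \to \Sigma$ that is uniformly expanding along unstable leaves with a piecewise-analytic Markov structure, together with a positive return-time function $r\colon \Sigma \to \R_{>0}$. The space $(\mathcal{H},\mu)$ is then modeled as the suspension of $(\Sigma,T)$ by $r$, with the canonical product measure corresponding to Masur--Veech.

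Next, I would introduce the family of twisted transfer operators $\mathcal{L}_s$ acting on H\"older functions on $\Sigma$, defined by weighting the pullback under $T$ by $e^{-sr}$ for complex $s$. Exponential mixing of $g_t$ for observables on the section is equivalent to a Dolgopyat-type spectral estimate: the spectral radius of $\mathcal{L}_s^n$ should decay exponentially in $n$, uniformly for $\operatorname{Im}(s)$ in vertical strips. From such an estimate, a contour-integral representation for correlations of a suspended observable yields exponential decay of $\int h_1 (h_2\circ g_t)\,d\mu - \int h_1\,d\mu \int h_2\,d\mu$ at a rate $\delta>0$ inherited from the spectral gap.

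The bookkeeping step is to convert this into the precise norm stated in the theorem. I would approximate compactly supported Lipschitz $h_1, h_2$ by functions on the section whose H\"older norms on $\Sigma$ are controlled by $\norm{h_i}_\infty + \norm{h_i}_{\text{Lip}}$, while the additive constant $C_K$ absorbs how deep the supports sit in the thin part of $\mathcal{H}$: observables on surfaces with very short saddle connections see more of the tall suspension tower, and $C_K$ bounds the resulting cost of trading boundedness for Lipschitz control along the flow direction. Lifting the section estimate back to the flow via a standard approximation-of-identity argument along the $g_t$-orbit then gives the stated inequality.

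The main obstacle is the Dolgopyat-type estimate for $\mathcal{L}_s$. Because $r$ is unbounded due to the cusp of $\mathcal{H}$, one must first build a Young-tower-like structure whose level tails decay exponentially; the exponential tail ultimately comes from quantitative recurrence of $g_t$ to the thick part (e.g.\ Athreya's inequality or Eskin--Masur-type counting). One then has to verify a uniform non-integrability (UNI) condition for $r$ along unstable manifolds and execute the oscillatory cancellation argument producing contraction of $\mathcal{L}_s^n$ as $|\operatorname{Im}(s)|$ grows. This is the technical heart of AGY and where most of the work would go; everything else in the proposal is structural and essentially automatic once the spectral estimate is in hand.
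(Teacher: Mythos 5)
There is no internal proof to compare against: the paper does not prove \cref{thm:AGY} at all, but imports it verbatim as Theorem C.4 of \cite{athreya2012right}, which in turn rests on the exponential mixing theorem of Avila--Gou\"ezel--Yoccoz \cite{avila2006exponential}. What you have written is essentially a summary of that cited proof (zippered-rectangle/Rauzy--Veech Poincar\'e section, suspension model, twisted transfer operators $\mathcal{L}_s$, Dolgopyat-type spectral estimate), so you have correctly identified where the result comes from; but as a standalone proof it has a genuine gap, and you say so yourself: the exponential tail of the return time, the UNI condition, and the Dolgopyat cancellation argument are deferred as ``the technical heart where most of the work would go.'' Those items \emph{are} the theorem; everything you do carry out is the routine reduction surrounding them. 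In the context of this paper the correct move is the one the authors make --- cite the result --- rather than re-derive AGY.

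A second, more specific gap is in your ``bookkeeping'' step. The AGY theorem gives exponential decay for observables in a particular Banach space (in \cite{avila2006exponential}, functions with controlled behavior under the $SO(2)$ and flow directions, measured in the metric of their Section 2.2.2), not directly for arbitrary compactly supported Lipschitz functions, and the precise statement used here --- the constant $C_K$ depending only on the shortest systole on the support, entering in the product $(C_K+\norm{h_1}_\infty+\norm{h_2}_{Lip})(C_K+\norm{h_2}_\infty+\norm{h_2}_{Lip})$ --- is exactly the content of the appendix argument in \cite{athreya2012right}. Your assertion that $C_K$ ``absorbs how deep the supports sit in the thin part'' gestures at this but gives no estimate; to make it a proof you would need to carry out the approximation of $h_1,h_2$ by admissible observables with norms controlled by $\norm{h_i}_\infty+\norm{h_i}_{Lip}$ and a systole-dependent constant, and check that the error terms decay at the same exponential rate. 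Without that, the specific inequality in the statement is not established even granting the spectral estimate.
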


In order to apply Theorem~\ref{thm:AGY}, we need to use bump functions to approximate the sets $A_i$ and $A_j$. By $g_t$-invariance of $\mu$, $$\mu(A_i\cap A_j) = \mu(H_{c_{\mathcal{H}},\sigma, i}\cap g_{\log(b^{j-i})} H_{c_{\mathcal{H}},\sigma,j}),$$ {and in particular, using Theorem \ref{thm:AGY} to bound $\mu(H_{c_{\mathcal{H}},\sigma, i}\cap g_{\log(b^{j-i})} H_{c_{\mathcal{H}},\sigma,j})$ in turn gives bounds for $\mu(A_i \cap A_j)$.} We will thus define our bump function to approximate $H_{c_{\mathcal{H}},\sigma, i}$ and $H_{c_{\mathcal{H}},\sigma, j}$.

\begin{defn}
Recall we fixed above (Section~\ref{sec:defmeas}) $0<\delta<1$ as in Theorem~\ref{thm:AGY}. For each $i$ and each $j > i + \frac{4}{\delta}\log\left(\frac{1}{\mu(A_i)}\right)$, define
$$\epsilon_{i,j} = e^{-\frac{\delta}{4}|i-j|}.$$
 Then for $\ell \in \{i,j\}$, define $\rho_{i,j}^{\ell}: \mathcal{H}\to \R$ to be zero outside of the coordinate patch for $H_{c_{\mathcal{H}},\sigma, i}$ and $H_{c_{\mathcal{H}},\sigma, j}$. 
Notice that by our choice, all the $H_{c_{\mathcal{H}},\sigma, k}$ are contained in a single coordinate patch, 
 $x_1,x_2\in \C$, and $x_3 \in \C^{2g+s-3}$ we equip each copy of $\C^\ell$ with the standard $L^2$ Euclidean metric on $\R^{2\ell}$. On this coordinate patch we set 
$$\rho_{i,j}^{\ell}(x_1,x_2,x_3) = f_1^{\ell}(x_1)f_2^{\ell}(x_2)f_3^{\ell}(x_3)$$
where
$$f_1^{\ell}(x_1) = \min\left\{1, \frac{1}{\epsilon_{i,j}} \dist\left(x_1,\bd T^+_{c_{\mathcal{H}},\sigma, \psi(b^\ell)}\right)\right\}\cdot \chi_{T^+_{c_{\mathcal{H}},\sigma, \psi(b^\ell)}},$$
$$f_2^{\ell}(x_2) = \min\left\{1, \frac{1}{\epsilon_{i,j}} \dist\left(x_2,\bd T^-_{c_{\mathcal{H}},\sigma, \psi(b^{\ell})}\right)\right\} \chi_{T^-_{c_{\mathcal{H}},\sigma, \psi(b^{\ell})}},$$
and
$$f_3^{\ell}(x_3) = \min\left\{1, \frac{1}{\epsilon_{i,j}} \dist \left(x_3, \bd B(0,1)\right) \right\} \chi_{B(0,1)}.$$
\end{defn}

 {We want to show that our approximation functions are Lipschitz, so we first discuss some options of norms.} The $\sup$-norm metric, which we denote $d_{AGY}$, is used to define the Lipschitz norm in Theorem~\ref{thm:AGY} which is given in \cite[Section 2.2.2]{avila2006exponential}. For our purposes it is more convenient to use a metric coming from period coordinates, $d_{per}$. We refer the reader to \cite[Section 3.2]{Per-Teich} {(where $d_{per}$ is denoted $d_{Euclidean}$)} for a detailed description of the construction on {the space of all quadratic differentials with a fixed genus}. {We note that the arguments in  \cite[Section 3.2]{Per-Teich} can be repeated verbatim for a fixed stratum.} On each compact set this metric is uniformly comparable to the $\sup$-norm metric, so for the estimate above the only effect on the Lipschitz functions we consider, which are all supported in a fixed compact set, is a multiplicative constant in the Lipschitz norms. 

We now briefly outline that these two metrics are uniformly comparable on compact sets.
 Both $d_{AGY}$ and $d_{per}$ are defined as path metrics coming from norms on relative cohomology, where the norms depend on the point. Let us call these \emph{AGY norms} and \emph{Period norms}. The AGY norms vary continuously \cite[Proposition 2.11]{avila2006exponential} and so for each compact $\hat{K}$  all the AGY norms corresponding to points in $\hat{K}$ are uniformly comparable. On each compact set there are only finitely many Period norms (see the Remark at the end of \cite[{Section 3.2}]{Per-Teich}), so all the norms we consider are uniformly comparable. The diameter of each compact set, $K$, is bounded in both $d_{per}$ and $d_{AGY}$, so it suffices to prove there is a $\hat{\delta}>0$ and $C$ (both depending on ${K}$) so that if $x,y \in {K}$ then 
\begin{equation}\label{eq:met comp}
\frac 1 C \min\{\hat{\delta}, d_{per}(x,y)\}< \min\{\hat{\delta}, d_{AGY}(x,y)\}<C \min\{\hat{\delta}, d_{per}(x,y)\}.
\end{equation} 
To see \eqref{eq:met comp}, we choose a slightly larger compact set ${K}'$ so that ${K}\subset Int({K}')$ and in particular $\min\{d_{per}({K},\partial{K}'),d_{AGY}({K},\partial{K}')\}=\delta'>0$. Let $C'$ be so that all the AGY and Period norms for all $\omega \in {K}'$ are comparable by $C'$. That is, if $\omega, \omega' \in {K}'$ and $\|\cdot\|_{\omega},\|\cdot\|_{\omega'}$ are Period/AGY norms at $\omega,\omega'$ then 
$$\frac 1 {C'}\|x-y\|_{\omega}\leq \|x-y\|_{\omega'}\leq C'\|x-y\|_{\omega}.$$ We choose $\hat{\delta}<\frac 1 {C'}\delta'$ so that for any $\omega \in {K}$, the $\hat{\delta}$-neighborhood of $\omega$ in both metrics is contained in a coordinate patch. Let $\omega \in {K}$ be given and $dev:\mathcal{U} \to H^1(S,\Sigma,\mathbb{C})$ be a coordinate chart so that the $\hat{\delta}$-neighborhood of $\omega$ in both metrics are contained in $\mathcal{U}$. Now if $\min\{d_{per}(\omega,\omega'),d_{AGY}(\omega,\omega')\}<\hat{\delta}$ we have that 
$$\frac 1 {C'}\|dev(\omega)-dev(\omega')\|_{\omega''}<d_{per}(\omega,\omega')<C'\|dev(\omega)-dev(\omega')\|_{\omega''}$$
and 
$$\frac 1 {C'}\|dev(\omega)-dev(\omega')\|_{\omega''}<d_{AGY}(\omega,\omega')<C'\|dev(\omega)-dev(\omega')\|_{\omega''}$$
where $\omega''\in {K}'$ is arbitrary and $\|\cdot\|_{\omega''}$ is either of the AGY or Period norms at $\omega''$. Because \eqref{eq:met comp} is true outside of a $\hat{\delta}$ neighborhood in both metrics, we have \eqref{eq:met comp} with $C=C'$.

\begin{lem}\label{lem:rhoLipschitz} There exists a constant $C'$ so that the functions $\rho_{i,j}^\ell$ are $\frac{{C'}}{\epsilon_{i,j}}$-Lipschitz with respect to $d_{per}$.
\end{lem} 
\begin{proof}Note that $f_{k}^\ell$ for $k=1,2,3$ are all $\frac{1}{\epsilon_{i,j}}$-Lipschitz as the Euclidean distance function being $1$-Lipschitz implies 
\begin{align*}\left|f_k^\ell(x_k) - f_k^\ell(y_k)\right| &\leq \frac{1}{\epsilon_{i,j}} \left|\text{dist}\left(x_k, \bd T_{c_{\mathcal{H}},\sigma, \psi(b^\ell)}\right) - \text{dist}\left(y_k, \bd T_{c_{\mathcal{H}},\sigma, \psi(b^\ell)}\right)\right|\\
& \leq \frac{1}{\epsilon_{i,j}} \left|x_k - y_k\right|.\end{align*}

Now we claim that the function $\rho_{i,j}^\ell$ is $\frac{1}{\epsilon_{i,j}}$-Lipschitz with respect to the metric on $\mathcal{H}$ given by 
$$ d_\mathcal{H}((x_1,x_2,x_3),(y_1,y_2,y_3)) = |x_1-y_1| + |x_2-y_2| + |x_3 - y_3|.$$

To see this, let $(x_1,x_2,x_3)$ and $(y_1,y_2,y_3)$ be fixed. We compute
\begin{align*}
	&|\rho(x_1,x_2,x_3) -\rho(y_1,y_2,y_3)|\\
	&\leq |f_1(x_1) f_2(x_2) f_3(x_3) - f_1(y_1) f_2(x_2)f_3(x_3)|\\
	&\quad+ |f_1(y_1) f_2(x_2)f_3(x_3) - f_1(y_1)f_2(y_2)f_3(x_3)| \\
	&\quad+ |f_1(y_1)f_2(y_2) f_3(x_3) - f_1(y_1)f_2(y_2) f_3(y_3)| \\
	& \leq \frac{1}{\epsilon_{i,j}}\left(f_2(x_2)f_3(x_3) |x_1 - y_1| + f_1(y_1) f_3(x_3)|x_2-y_2| + f_1(y_1) f_2(y_2) |x_3- y_3|\right)\\
	&\leq \frac{1}{\epsilon_{i,j}}   d_{\mathcal{H}} ((x_1,x_2,x_3) - (y_1,y_2,y_3)). \tag{Since $f_i \leq 1$}
\end{align*}
As in the proof of the uniform comparability of $d_{per}$ and $d_{AGY}$, since $d_{\mathcal{H}}$ comes from a single norm, $d_{\mathcal{H}}$ is uniformly comparable to $d_{per}$.\qed
\end{proof}

We can now use the definition of the $\rho_{i,j}^\ell$ to state a corollary of Theorem~\ref{thm:AGY}.
\begin{coro} \label{cor:AGY_for_rho}
	Fix $0<\delta < 1$. Then there exists a constant $C$ so that for all $j > m_i$, 
	$$\int \rho_{i,j}^i (\rho_{i,j}^j \circ g_{\ell_0(j-i)}) \,d\mu \leq C\mu(A_i)\left[\mu(A_j) + e^{-\frac{\delta}{4}|i-j|}\right].$$
\end{coro}
\begin{proof}
	By definition $\norm{\rho_{i,j}^\ell}_{\infty} = 1$, and by Lemma~\ref{lem:rhoLipschitz} $\norm{\rho_{i,j}^\ell} = \frac{{C'}}{\epsilon_{i,j}} = e^{\frac{\delta}{4}|i-j|}.$ By Theorem~\ref{thm:AGY}, for perhaps a different constant $C$ coming from the uniformly comparable 
	{ metrics and} {$C'$,} the fact that $b \geq e$, and writing $C_K +1 = C_K^+$,
\begin{align*}
	&\abs{\int\rho_{i,j}^i (\rho_{i,j}^j \circ g_{\ell_0(j-i)})) - \int \rho_{i,j}^i \int \rho_{i,j}^j} \\
	&\leq C(C_K^+ + e^{\frac{\delta}{4} |i-j|})^2 e^{-\delta|i-j|}\\
	&= C(C_K^+)^2 e^{-\delta|i-j|} + 2CC_K^+ e^{-\frac{3\delta}{4} | i-j|} + Ce^{-\frac{\delta}{2} |i-j|} \tag{Since $e^{-\frac{\delta}{2}|i-j|} \geq e^{-\frac{3}{4}\delta |i-j|} \geq e^{-\delta |i-j|}$ as $e^{-\delta|i-j|} < 1$}\\
	&\leq \tilde{C} e^{-\frac{\delta}{2}|i-j|}.
	\end{align*}
	
	By construction of $\rho_{i,j}^{\ell}$, $\int \rho_{i,j}^\ell = \mu(H_{c_{\mathcal{H}},\sigma,\ell}) = \mu(A_\ell)$, so
	$$\int \rho_{i,j}^i  (\rho_{i,j}^j \circ g_{\ell_0(j-i)})) \leq \int \rho_{i,j}^i \int \rho_{i,j}^j + \tilde{C} e^{-\frac{\delta}{2} |i-j|} \leq \mu(A_i)\mu(A_j) + \tilde{C}e^{-\frac{\delta}{2}|i-j|}.$$
	
	Since $j > m_i$, we have $e^{-\frac{\delta}{4}|i-j|} <\mu(A_i).$ 
	Thus, using $\tilde{C} > 1$,
	$$\int \rho_{i,j}^i  (\rho_{i,j}^j \circ g_{\ell_0(j-i)}))  \leq \tilde{C}\mu(A_i) \left[\mu(A_j) + e^{-\frac{\delta}{4} |i-j|}\right].$$
\qed \end{proof}
Next our goal is to get a relationship between $\mu(A_i\cap A_j)$ and $\int \rho_{i,j}^i (\rho_{i,j}^j \circ g_{\ell_0(j-i)}))$. 

\begin{lem}\label{lem:intersection less than rho product}
 For all $j > m_i$, 
 $$\mu(A_i\cap A_j) \leq \int \rho_{i,j}^i (\rho_{i,j}^j \circ g_{\ell_0(j-i)})) \,d\mu+ \mu(E_j) + \mu(E_i)$$
 where $E_\ell = \{\omega: \rho_{i,j}^\ell \in (0,1)\}.$
\end{lem}
\begin{proof}
	We first make a general claim.
	
	\noindent
	\textbf{Claim} If $0\leq g \leq f\leq 1$ then 
	$$\int f \leq \int g + \mu\{f \neq g\}.$$
	To see this is true, we write 
	$$\int f = \int g+ f - g = \int g + \int_{\{f\neq g\}} f-g  \leq \int g + \mu\{f\neq g\}$$
	where the last inequality follows since $f- g \leq 1$. 
	
	The proof now follows from the claim where $f = \chi_{H_{c_{\mathcal{H}},\sigma, i}\cap g_{\ell_0(j-i)})H_{c_{\mathcal{H}},\sigma,j}}$ and $g = \rho^i_{i,j} (\rho_{i,j}^j\circ g_{\ell_0(j-i)}))$, combined with the fact that $f\neq g$ occurs when $\rho_{i,j}^i \in (0,1)$ or $\rho_{i,j}^j \in (0,1)$. So
	$\mu\{f\neq g\} \leq \mu(E_i) + \mu(E_j).$
\qed \end{proof}

\subsection{Proof that Proposition \ref{prop:Exp_Decay_Borel_Cantelli} Assumption (3) holds} \label{sec:(3)}
To finish the proof of Proposition \ref{prop:Exp_Decay_Borel_Cantelli} Assumption (3) we will relate $\int \rho_{i,j}^i \int \rho_{i,j}^j $ to $\mu(E_i) + \mu(E_j)$ from Lemma~\ref{lem:intersection less than rho product}. In order to do so, we will show it suffices to add a technical assumption about the behavior of $\psi$.

\begin{lem} \label{lem:ugh}
We may assume that for all $i, j$ with  $j> i + \frac{4}{\delta}\log\left(\frac{1}{\mu(A_i)}\right)$
 \begin{equation}\label{eq:ugh}
			e^{-\frac{\delta}{4}|i-j|} \leq\min\left\{ \frac{\sigma^4}{7^4} \frac{1}{\psi(b^j)^4}, \frac{r}{2},  2^{-(n_{\mathcal{H}} +2)}\right\}
		\end{equation}	
		where { $r$ is as in the proof of Lemma~\ref{lem:measurebounds} and }
		$n_{\mathcal{H}} = -\log_2(1-c_{\mathcal{H}})$ for $c_{\mathcal{H}}$ as in Lemma~\ref{lem:measurebounds}.
		\end{lem}

\begin{lem}\label{lem:productrhos_sumE_i}
	Under the assumptions of Lemma~\ref{lem:ugh} there exists a constant $C>1$ so that 
	$$C \int \rho_{i,j}^i {d\mu}\int \rho_{i,j}^j {d\mu}> \mu(E_i) + \mu(E_j).$$
\end{lem}
Indeed Lemma~\ref{lem:productrhos_sumE_i} is sufficient to conclude the proof of (3) as follows:
\begin{theopargself} \begin{proof}[Proposition~\ref{prop:sufficient is satisfied}, part (3)] 
	Fix $i$ and let $j > m_i$. Then
	\begin{align*}
		\mu(A_i\cap A_j) &\leq \int \rho_{i,j}^i (\rho_{i,j}^j \circ g_{\ell_0(j-i)})) + \mu(A_i) + \mu(A_j) \tag{By Lemma~\ref{lem:intersection less than rho product}}\\
		& \leq  \int \rho_{i,j}^i (\rho_{i,j}^j \circ g_{\ell_0(j-i)})) + C \int \rho^i_{i,j} \int \rho^j_{i,j} \tag{By Lemma~\ref{lem:productrhos_sumE_i}}\\
		&\leq C\mu(A_i) \left[\mu(A_j) + e^{-\frac{\delta}{4}|i-j|}\right] + C\mu(A_i)\mu(A_j) \tag{by Corollary~\ref{cor:AGY_for_rho}, and since $\int\rho_{i,j}^\ell \leq \mu(H_{c_{\mathcal{H}},\sigma,\ell}) = \mu(A_\ell)$}\\
		&\leq \tilde{C}\mu(A_i) \left[\mu(A_j) + e^{-\frac{\delta}{4}|i-j|}\right]\tag{Setting $\tilde{C} = 2C$.}
	\end{align*}
\qed \end{proof}\end{theopargself}

Since Lemma \ref{lem:productrhos_sumE_i} depends on the additional assumptions in Lemma~\ref{lem:ugh}, we will first prove Lemma~\ref{lem:ugh} by replacing the function $\psi$ with a function $\tilde{\psi}$ which satisfies Equation~\eqref{eq:ugh} and is still sufficient to prove the desired conclusion for $\psi$. The proof of Lemma~\ref{lem:ugh} uses  Lemma~\ref{lem:psifix}. After stating Lemma~\ref{lem:psifix}, we will then give a proof of Lemma~\ref{lem:ugh}, then Lemma~\ref{lem:productrhos_sumE_i}, and the section concludes with the proof of Lemma~\ref{lem:psifix}.

\noindent

		\begin{lem} \label{lem:psifix}
		 Let $\{a_j\}_{j\in \mathbb{N}}$ be a non-increasing sequence of positive numbers with $\sum_{j=1}^\infty a_j = \infty$. For any $\rho,k,\tau >0$ with $\tau<1$, there exists a constant $C >0$ and a non-increasing sequence $\{c_j\}_{j\in \N}$ with $c_j=\min\left\{\frac 1 j, \max\{a_j, \frac 1 {j^2}\}\right\}$, so that $\sum_{j=1}^\infty c_j = \infty$, $\sum_{j:c_j> a_j}a_j<\infty$, and  

\begin{equation}\label{eq:psifix}
\text{whenever } i\geq 3, \text{ for all } j>\max\{i-C\log(c_i),9\}, \text{ we have } e^{-\rho(j-i)}< \tau c_j^k. 
\end{equation}
		\end{lem} 
		 \begin{theopargself} \begin{proof}[Lemma~\ref{lem:ugh}] 
			We first note that, from the proof of Lemma~\ref{lem:measurebounds}, we can choose $c_{\mathcal{H}}$ close enough to $1$ so that we always have $2^{-n_{\mathcal{H}}} < 2r$, and thus 
			$$\min\left\{ \frac{\sigma^4}{7^4} \frac{1}{\psi(b^j)^4}, \frac{r}{2},  2^{-(n_{\mathcal{H}} +2)}\right\} = \min\left\{ \frac{\sigma^4}{7^4} \frac{1}{\psi(b^j)^4},   2^{-(n_{\mathcal{H}} +2)}\right\}$$		
		 
		 We next construct a modification replacing $\psi$ by $\hat{\psi}$ so that $e^{-\frac{\delta}{4}|i-j|} \leq \frac{\sigma^4}{7^4} \frac{1}{\hat{\psi}}(b^j)^4$ implies $e^{-\frac{\delta}{4}|i-j|} \leq\min\left\{ \frac{\sigma^4}{7^4} \frac{1}{\psi(b^j)^4}, \frac{r}{2},  2^{-(n_{\mathcal{H}} +2)}\right\}$. 
		 Namely using the constants from Lemma~\ref{lem:measurebounds}, define
		 $$\widehat{\psi}(b^i) = \begin{cases}\psi(b^i) & \frac{M \sigma^2}{\psi(b^i)} <  2^{-(n_{\mathcal{H}} +2)} \\ M \sigma^2 2^{(n_{\mathcal{H}} +2)} & \text{otherwise.}\end{cases}$$
		We now have the upper bounds of $r/2$ and $2^{-n_\mathcal{H}+2}$ are trivially satisfied for $\widehat{\psi}$ for all $j > i  + \frac{4}{\delta} \log\left(\frac{1}{\mu(A_i)}\right)$. Moreover for sets $\widehat{A}_j$ corresponding to $\widehat{\psi}$, $\widehat{A}_j \subseteq A_j$, and so $\mu(\limsup A_j) \geq \mu(\limsup \widehat{A}_j)$. Since our goal is to prove positive measure, we may now always assume the upper bounds of $r/2$ and $2^{-n_\mathcal{H}+2}$ hold.
		
		We now construct $\widetilde{\psi}$ from $\psi$ which satisfies Equation~\eqref{eq:ugh} by defining $\widetilde{\psi}(b^j) = c_j^{-1/2}$ where $c_j = \min\{1/j, \max\{a_j, 1/j^2\}\}$ is the sequence from Lemma~\ref{lem:psifix} for $a_j = \frac{1}{\psi(b^j)^2}$, $\rho =\frac{\delta}{4}$, $k=2$, and $\tau = \frac{\sigma^4}{7^4} <1$. Let $\widetilde{A}_i$ be the set corresponding to $\widetilde{\psi}$.
		
		Notice that $c_i = \frac{1}{\tilde{\psi}(2^i)^2}$, so by Lemma~\ref{lem:psifix} whenever $i\geq 3$ and $j > \max\{i- C\log(\tilde{\psi}(2^i)^2),9\}$, we indeed have for $i$ large enough that making $C$ larger if necessary
		$$e^{-\frac{\delta}{4}|i-j|} < (\widetilde{\psi}(2^i))^{-2 \cdot C\frac{\delta}{4}} < m \sigma^2(\widetilde{\psi}(2^i))^{-2 } \leq \mu(\widetilde{A}_i) $$
		 and moreover we have the desired inequality that
 $$e^{-\frac{\delta}{4} |i-j|} < \frac{\sigma^4}{7^4}\frac{1}{\tilde{\psi}(2^j)^4} .$$
 	
 		The restrictions on $i\geq 3$ and $j >9$ do not play a role in the conclusion for the limsup sets. For $j \geq n > 9$, notice $c_j \leq a_j$ implies $\widetilde{A}_j \subseteq A_j$, so 
 		$$\bigcup_{j=n}^\infty A_j \supseteq \bigcup_{\stackrel{j=n}{c_j\leq a_j}} \tilde{A}_j \cup \bigcup_{\stackrel{j=n}{c_j>a_j}} {A}_j \supseteq \bigcup_{\stackrel{j=n}{c_j\leq a_j}} \tilde{A}_j .$$
 		On the other hand since $c_j > a_j$ exactly when $c_j = \frac{1}{j^2}$,
$$\mu\left(\bigcup_{j=n}^\infty \tilde{A}_j\right) \leq \mu\left( \bigcup_{\stackrel{j=n}{c_j \leq a_j}}^\infty \tilde{A}_j\right)  + \mu \left(\bigcup_{\stackrel{j=n}{c_j > a_j}}^\infty \tilde{A}_j\right) \leq \mu\left( \bigcup_{\stackrel{j=n}{c_j \leq a_j}}^\infty \tilde{A}_j\right)  + \sum_{\stackrel{j=n}{c_j>a_j}}^\infty \frac{m \sigma^2}{j^2} .$$
	Since the tail sum of convergent series goes to zero, for any $\epsilon$ we can choose $n$ large enough  so that $\sum_{j>n, c_j>a_j} \frac{m\sigma^2}{j^2} < \epsilon$,
Hence,
$$\mu\left(\bigcup_{j=n}^\infty A_j\right) \geq \mu\left(\bigcup_{\stackrel{j=n}{c_j\leq a_j}} \tilde{A}_j\right) \geq \mu\left(\bigcup_{j=n}^\infty \tilde{A}_j\right) - \epsilon.$$
Thus in the limit we conclude $\mu(\limsup A_j) \geq \mu(\limsup \tilde{A}_j).$
\qed \end{proof}\end{theopargself}
 With our preliminary work, Lemma~\ref{lem:productrhos_sumE_i} follows from geometric estimates on measures of balls and trapezoids.

\begin{theopargself} \begin{proof}[Lemma \ref{lem:productrhos_sumE_i}]
	Since $\rho_{i,j}^j \leq \rho_{i,j}^j$, we have 
	\begin{equation} \label{eq:rhoj=1}
	\int \rho_{i,j}^i \int \rho_{i,j}^j \geq \left(\int \rho_{i,j}^j\right)^2 \geq \mu\{\rho_{i,j}^j = 1\}^2.
	\end{equation}
	Now we want to get a lower bound for the measure of the set where $\rho_{i,j}^j = 1$. To do this, we need to find the area of the subset of the trapezoid $T_{c_{\mathcal{H}},\sigma,j}^+$ where the $\rho_{i,j}^j = 1$. To do this, note the horizontal line $y= \epsilon_{i,j}$ from $c_{\mathcal{H}}+\epsilon$ to $1-\epsilon$ give the height of the inner trapezoid. The line which is length $\epsilon_{i,j}$ away from the line $y= \frac{\sigma}{\psi(b^j)} x$ is given by 
	$$y = \frac{\sigma}{\psi(b^j)} x - \epsilon\sqrt{1 + \frac{\sigma^2}{\psi(b^j)^2}}.$$
	Thus the four corners of the trapezoid where $\rho_{i,j}^j = 1$ are given by $(c_{\mathcal{H}}+\epsilon )+i \epsilon, (1-\epsilon) + i \epsilon,$
	$$\left(1-\epsilon\right) + i\left( \frac{\sigma}{\psi(b^j)}(1-\epsilon) - \epsilon\sqrt{1+\frac{\sigma^2}{\psi(b^j)^2}}\right),$$ and $$\left(c_{\mathcal{H}}+\epsilon\right) + i\left(\frac{\sigma}{\psi(b^j)}(c_{\mathcal{H}}+\epsilon) - \epsilon\sqrt{1+\frac{\sigma^2}{\psi(b^j)^2}}\right). $$
	
	Hence the area of subset of $T_{c_{\mathcal{H}},\sigma,j}^+$ where $\rho_{i,j}^j = 1$ is given by 
	$$\frac{(1-c_{\mathcal{H}}-2\epsilon_{i,j})}{2}\left(\frac{\sigma}{\psi(b^j)}(1+c_{\mathcal{H}}) -2\epsilon_{i,j}\sqrt{1 + \frac{\sigma^2}{\psi(b^j)^2}}-2\epsilon_{i,j}\right). $$
By symmetry, the area of $T_{c_{\mathcal{H}},\sigma,j}^-$ where $\rho_{i,j}^j = 1$ is the same as the area for $T_{c_{\mathcal{H}},\sigma,j}^+$. Thus the total area is the product of the two trapezoids with the product of the ball where $n + 4 = 2g + s -1$ and $\sigma(n)$ is gives the volume of the $n$-ball. That is 
	\begin{align*}
	\mu(\{\rho_{i,j}^j= 1\}) &= \frac{(1-c_{\mathcal{H}}-2\epsilon_{i,j})^2\sigma(n)(r-\epsilon_{i,j})^n}{4}\\
	 &\hspace{.25in}\cdot \left(\frac{\sigma}{\psi(b^j)}(1+c_{\mathcal{H}}) -2\epsilon_{i,j}\sqrt{1 + \frac{\sigma^2}{\psi(b^j)^2}}-2\epsilon_{i,j}\right)^2\\
	&\geq d_n(1-c_{\mathcal{H}}-2\epsilon_{i,j})^2\left(\frac{\sigma}{\psi(b^j)} -2\epsilon_{i,j}\sqrt{1 + \frac{\sigma^2}{\psi(b^j)^2}}-2\epsilon_{i,j}\right)^2 \tag{by \eqref{eq:ugh}, $\epsilon_{i,j} \leq \frac{r}{2}$ so $d_n$ depends only on $n$, and since $1+ c_{\mathcal{H}} \geq 1$}\\
	&= d_n \left(2^{-n_{\mathcal{H}}}- 2\epsilon_{i,j}\right)^2\left[\frac{\sigma}{\psi(b^j)} - 6\epsilon_{i,j}\right]^2\tag{substituting $1-c_\mathcal{H} = 2^{-n_\mathcal{H}}$ from Lemma~\ref{lem:measurebounds} and $\sigma^2 / \psi(b^j)^2 < 1 < 3$} \\
	& \geq \frac{d_n}{2^{2(n_\mathcal{H} + 1)} }\left[\frac{\sigma}{\psi(b^j)} - 6\epsilon_{i,j}\right]^2 \tag{assuming by \eqref{eq:ugh} $\epsilon_{i,j} < 2^{-(n_\mathcal{H} + 2)}$, which  implies $2^{-n_\mathcal{H}} - 2\epsilon_{i,j} > 2^{-(n_\mathcal{H} + 1)}$ }\\
	& \geq  \frac{d_n}{2^{2(n_\mathcal{H} + 1)}} \epsilon_{i,j}^\frac{1}{2}. \tag{assuming by \eqref{eq:ugh} that $\epsilon_{i,j} \leq \frac{\sigma^4}{7^4 \psi(b^j)^4}$, thus $6\epsilon_{i,j} + \epsilon_{i,j}^\frac{1}{4} \leq 7 \epsilon_{i,j}^\frac{1}{4}\leq \frac{\sigma}{\psi(b^j)}$}
	\end{align*}
	Combining this fact with \eqref{eq:rhoj=1}, we obtain
	\begin{equation}\label{eq:lowerboundproduct}
	 \int \rho_{i,j}^i \int \rho_{i,j}^j \geq \frac{d_n^2}{2^{4(n_\mathcal{H} + 1)}} \epsilon_{i,j}. 
	\end{equation}
	
	Now from the other end we want an upper bound for $\mu\{\rho^i \in (0,1)\} + \mu\{\rho^j \in (0,1)\} \leq 2C\epsilon_{i,j}.$

	Given $\ell = i$ or $\ell = j$, we have the area of the $\epsilon_{i,j}$-boundary of one of the trapezoids $T_{c_\mathcal{H},\sigma, 2^\ell}^\pm$ is given by 
 	\begin{align*}
 		&\mu(\bd_{\epsilon_{i,j}}T_{c_\mathcal{H},\sigma,2^\ell}^\pm) \\
 		&=\frac{\sigma}{2 \psi(b^\ell)} (1-c_\mathcal{H}^2) - \left(\frac{1-c_\mathcal{H}}{2} - \epsilon_{i,j}\right)\left(\frac{\sigma}{\psi(b^\ell)}(1+c_\mathcal{H})-2\epsilon_{i,j} \sqrt{1 + \frac{\sigma^2}{\psi(b^\ell)^2}}\right)\tag{taking area of $T_{c_\mathcal{H},\sigma,\ell}^\pm$ less the area where $\rho_{i,j}^\ell = 1$}\\
 		&=  \epsilon_{i,j}\left[\frac{\sigma}{\psi(b^\ell)}(1+c_\mathcal{H}) + \left(1-c_\mathcal{H} - 2\epsilon_{i,j}\right)\left[1 + \sqrt{1 + \frac{\sigma^2}{\psi(b^\ell)^2}}\right]\right]\\
 		&\leq \epsilon_{i,j}\left[1+c_\mathcal{H} + (1-c_\mathcal{H}) (1 + \sqrt{2})\right]  \tag{assuming $\sigma < \psi(b^\ell)$ which is easy since $\sigma < 1$ is fixed and $\psi \geq 1$ is nondecreasing} \\
 		&\leq \epsilon_{i,j} C \tag{where $C$ depends on $c_\mathcal{H}$}.
 	\end{align*}
 	Thus
 	\begin{equation} \label{eq:sumupperbound}
 		\mu(\{\rho^i_{i,j} \in (0,1)\}) + \mu(\{\rho^j_{i,j}\in (0,1)\}) \leq 2 C\epsilon_{i,j}.
 	\end{equation}
	Combining Equation~\ref{eq:sumupperbound} with Equation~\ref{eq:lowerboundproduct}, 
	\begin{align}
		 &\int \rho_{i,j}^i \int \rho_{i,j}^j \geq \frac{d_n^2}{2^{4(n_\mathcal{H} + 1)}} \epsilon_{i,j}\nonumber\\
		 & \geq \frac{d_n^2}{2^{4(n_\mathcal{H} + 1)} (2C)} \left[\mu\{\rho^i_{i,j}\in (0,1)\} + \mu\{\rho^j_{i,j} \in (0,1)\}\right].\label{eq:eps_jlowerbound}
	\end{align}
	
	Setting $\tilde{C} = \frac{2^{4(n_\mathcal{H} + 1)}(2C)}{d_n^2}$, we can assume $\tilde{C}>1$ since $d_n$ is bounded above by a fixed constant, and we can make $C$ larger if necessary.
\qed \end{proof}\end{theopargself}

\begin{theopargself} \begin{proof}[Lemma~\ref{lem:psifix}.]
We first claim the following:

\noindent{\textbf{Claim}:} The sequence $c_j$ is non-increasing, has divergent sum and 
$$\sum_{j:c_j>a_j}a_j<\infty, \quad \sum_{j:c_j> a_j}c_j<\infty.$$
\begin{theopargself} \begin{proof}[of Claim.] The maximum of two non-increasing sequences is non-increasing and so $\max\{a_j, \frac 1 {j^2}\}$ is a non-increasing sequence (in $j$). 
Similarly the minimum of two non-increasing sequences is non-increasing and so $c_j$ is non-increasing. 

If $\max\{a_j, \frac{1}{j^2}\} = \frac{1}{j^2}$, then $c_j = j^{-2} > a_j$. Otherwise $c_j = \min\{1/j, a_j\} \leq a_j.$

If $c_j> a_j$ then $a_j<\frac 1 {j^2}$ and so clearly $\sum_{c_j>a_j}a_j<\sum \frac 1 {j^2}<\infty$. 
On the other hand, since $c_j>a_j$ is only possible when $c_j = j^{-2}$,
$$\sum_{c_j> a_j} c_j = \sum_{c_j >a_j} j^{-2} <\infty.$$

Now observe that 
\begin{equation}\label{eq:cauchy cond}\sum_{j=2^{k}}^{2^{k+1}-1}c_j\geq \min\left\{\frac 1 2 ,2^ka_{2^{k+1}}\right\}.
\end{equation} Indeed we are estimating the sum from below by $2^kc_{2^{k+1}}$ and considering the different possibilities of $c_{2^{k+1}}$. Notice that $2\sum_k 2^ka_{2^{k+1}}\geq \sum_ja_j$ and so $\sum2^ka_{2^{k+1}}$ diverges and thus $\sum_k \min\{\frac 1 2 ,2^ka_{2^{k+1}}\}$ diverges. So $\sum_j c_j=\sum_k \sum_{j=2^k}^{2^{k+1}-1}c_j$ diverges. 
\qed \end{proof}\end{theopargself}

We have proved the Claim and now proceed with the remainder of the proof of Lemma~\ref{lem:psifix}. We now show if $C>4\frac {\tilde{k}} \rho(\frac{1}{\tilde{\tau}}+1)$
then Equation~\eqref{eq:psifix} holds where $\tilde{\tau}=\min\{\tau,1\}$ and $\tilde{k}=\max\{\frac 1 2 ,k\}$. Indeed it suffices to show that for all $j>i+C\log(i)$ we have that 
\begin{equation}\label{eq:5.1 bound}
e^{-\rho(j-i)}<\frac{\tau}{ {j}^{2k}}.
\end{equation} 
 Clearly smaller $\tau$ and larger $2k$ make \eqref{eq:5.1 bound} harder to satisfy. So from here we assume $\tau\leq 1$ and $2k\geq 1$, which motivated our choice of $\tilde{\tau}$ and $\tilde{k}$. Under these assumptions, Equation \eqref{eq:5.1 bound} is implied by $j-i >\frac {2k} {\rho}\log(\frac{j}{\tau})$.

If $j\leq 2i$ this follows from our condition on $C$. Indeed 
$$j-i>4 \frac k \rho\left(\frac 1 \tau+1\right) \log(i)> 4\frac k \rho\left(\frac 1 \tau+1\right)(\log(j)-\log(2)).$$ And so 
$$j-i>4\frac{k}\rho(\log\left({j}^{\frac 1 \tau+1}\right)-\log(2^{\frac 1 \tau+1}))> 2\frac k \rho \log(j^{\frac 1 \tau+1})> \frac {2k} {\rho}\log\left(\frac{j}{\tau}\right).$$  Note that the second inequality uses that $\log(j^{\frac 1 \tau+1})>2\log(2^{\frac 1 \tau+1})$ because $j\geq 9>2^2$ and the third inequality uses that $j^{\frac 1 \tau+1}>\frac 1 \tau j$ for all $j\geq 9$ and $\tau>0$.

For the case when $j>2i$, set $f(x)=x-i$ and $g(x)=\frac  {2k}{\rho} \log\left(\frac{x}{\tau}\right)$. Note that $f(2i) > g(2 i)$ from the case where $j\leq 2i$. Moreover, $f'(x)=1>g'(x)=\frac {2k}{x\rho\tau}$ for all $x>\frac{2k}{\rho\tau}.$ Since $i\geq 3$ and $\log(3) > 1$,
$$j  > i+ C\log(i) > 3 + \frac{4k}{\rho} (\frac1\tau+1)  \log(3) > \frac{2k}{\rho\tau}.$$ 
So for all $j> 2 i$ we have $f'(j) > g'(j)$ and $f(2  i) > g(2 i)$. Hence $f(j) > g(j)$ for all $j\geq 2 i$ as desired.  
\qed \end{proof}\end{theopargself}

\section{Verifying Proposition~\ref{prop:Exp_Decay_Borel_Cantelli} Assumption (4)}\label{sec:averages}
We begin this section by defining the sets $B_i$ and $C_i$ required for Proposition~\ref{prop:sufficient is satisfied}, and then verify these sets satisfy Assumption (4) of 
Proposition~\ref{prop:Exp_Decay_Borel_Cantelli}.
\begin{defn}[Definition of the $B$'s] Set $I \stackrel{def}{=} (-\frac{\pi}{12}, \frac{\pi}{12}).$ For $k\in \N$ define
	$$B_k = g_{\log(b^k)} g_{\log\left(\sqrt{\frac{\psi(b^k)}{\sigma}}\right)}\bigcup_{\theta \in I} r_\theta W_k$$
	where we have the following definitions for the sets shown in Figure~\ref{fig:1}.
	We first pull back the set $A_k$ so that trapezoids in $H_{c,\sigma,k}$ makes a 45 degree angle and define
	$$\widetilde{W}_k = g_{log\left(\sqrt{\frac{\sigma}{\psi(b^k)}}\right)} g_{-log(b^k)} A_k.$$
	Then we restrict to a smaller subset of $\widetilde{W}_k$ denoted $W_k$ so that $r_\theta W_k \subset \widetilde{W}_k$ for $\theta \in I$. That is $W_k$ is the set of $\omega$ with two holonomy vectors $v_1$ and $v_2$ satisfying 
	$$ c_{\mathcal{H}}\sqrt{\frac{2\sigma}{\psi(b^k)}} \leq |v_1|, |v_2| \leq \sqrt{\frac{\sigma}{\psi(b^k)}} ,$$
	and  $$ \textnormal{arg}(v_1) \in \left(\frac{\pi}{12}, \frac{\pi}{6}\right), \quad \textnormal{arg}(v_2) \in \left(-\frac{\pi}{6}, -\frac{\pi}{12}\right).$$
\begin{figure}
	\begin{center}
		\begin{tikzpicture}[scale=3]
	\filldraw[gray!40!white, draw=black] (1/2,1/2)--(1,1)--(1,-1)-- (1/2, -1/2)--cycle;	
	\draw(0,0)-- (1,1);
	\draw (0,0)-- (1,-1);
	\draw (0,0)-- (1,0);
	\draw (1,-1)-- (1,1);
	\draw[red!30!white, fill = red!30!white] (.68,.28* .68) arc (15:30:1/1.41)  --(.86,.5) arc(30:15:1)  --cycle;
	\draw[red!30!white, fill = red!30!white] (.68,.28* -.68) arc (-15:-30:1/1.41)  --(.86,-.5) arc(-30:-15:1)  --cycle;
	\draw[red!60!white, dashed](1/1.41,-1/1.41) arc (315:405:1);
	\draw[red!60!white, dashed](1/2,-1/2) arc (315:405:1/1.41);
	\draw[red!60!white] (.68,.28* .68) -- (.96,.27);
	\draw[red!60!white] (.68, -.28*.68 ) -- (.96,-.27); 
	\draw[red!60!white] (.62,.58 * .62) -- (.86,.5);
	\draw[red!60!white] (.62,-.58*.62) -- (.86,-.5); 
\end{tikzpicture}	
\end{center}
\caption{The gray shaded region corresponds to the region $\tilde{W}_k$ and the pink region corresponds to $W_k$.
}
\label{fig:1}   
\end{figure}
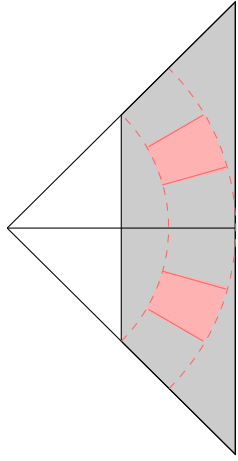
\end{defn}
\begin{defn}[Definition of the $C$'s]
	Define 
	$$C_k = g_{\log(b^k)} g_{\log\left(\sqrt{\frac{\psi(b^k)}{\sigma}}\right)}S(c_{\mathcal{H}},\sigma, b^k)$$
	where 
	\begin{align*} &S(c_{\mathcal{H}},\sigma, t) = \left\{\omega\in \mathcal{H}: \substack{\omega \text{ has a holonomy vector } v \text{ with }\\|v| \in \left(c_{\mathcal{H}}\sqrt{\frac{\sigma}{\psi(t)}}, \sqrt{\frac{2\sigma}{\psi(t)}}\right)} \right\}.
	\end{align*}
	
\end{defn}

\subsection{Proof that (4) (a)-(c) hold}\label{subsec:4a-c} 
We now verify assumptions (4) (a), (b) and (c), where we set $c = c_{\mathcal{H}}$.
Fix $i$ and take $j$ so that $i<j\leq i + C\log\left(\frac{1}{\mu(A_i)}\right)$. 
	\begin{enumerate}
		\item[(a)] As constructed $B_i \subseteq A_i$ and $A_j \subseteq C_j$.
		\item[(b)] Following the proof of Lemma~\ref{lem:measurebounds} where in Equation~\ref{eq:Balldef}, we replace the trapezoids by sectors of annuli, we obtain a constants $m',m>0$ so that
		$$\mu(W_i) \geq m' \frac{\sigma^2}{\psi(b^i)^2}\left(\frac{\pi}{24} \left(1 - 2c_{\mathcal{H}}\right)\right)^2 {{\textbf{m}}}_{2g+s-3}(B) > m \frac{\sigma^2}{\psi(b^i)^2}.$$ Thus by Lemma~\ref{lem:measurebounds}, $\mu(B_i) \geq \mu(W_i) \geq m \mu(A_i) /M.$
		\item[(c)] Since the measure is invariant under geodesic flow and by Lemma~\ref{lem:measurebounds},
		$$\mu(A_j) = \mu(H_{c_{\mathcal{H}},\sigma, j}) \geq m \frac{\sigma^2}{\psi(b^j)^2}.$$
		By Masur--Smillie Lemma~\ref{lem:MasurSmillie},
		$$\mu(C_j) = \mu(S(c_{\mathcal{H}},\sigma,2^j)) \leq M \frac{\sigma}{\psi(b^j)}.$$
		Thus
		$$\mu(C_j) \leq \frac{M}{\sqrt{m}} \sqrt{\frac{m\sigma^2}{\psi(b^j)^2}} = \frac{M}{\sqrt{m}} \mu(A_j)^\frac{1}{2}.$$
\end{enumerate}

\subsection{Construction and circle averages of logsmooth functions}\label{sec:circleaverages}

	The main goal of this section is to prove Corollary~\ref{cor:AverageUpperBound}, which extends the statements of \cite{Dozier19} (giving averages over intervals) to include so-called logsmooth functions from \cite{Athreya06Thesis} (which gives averages over the full circle) 
	\begin{defn}
		A \textit{complex} $K$ in $\omega$ is a closed subset of $X$ whose boundary $\bd K$ consists of a union of disjoint (in the interior) saddle connections such that if $\bd K$ contains three saddle connections bounding a triangle, then the interior of that triangle is in $K$. Given a complex $K$ the \textit{complexity} of $K$ is the number of saddle connections needed to triangulate $K$ {and $|\bd K|$ is the length of the longest saddle connection in $\bd K$.}  For any $\delta > 0$ and $k\in \N$, if $M$ is the complexity of $\omega$,
		$$\alpha_k(\omega) = \max_{\stackrel{K \text{ complexity } k}{\text{area}(K) < 2^{k-M-1}}} \frac{1}{|\bd K|^{1+\delta}}.$$
		If the set over which we take the maximum is empty, then we set $\alpha_k(\omega) = 0$. 
		\end{defn} 
		\begin{defn}
			Given a function $f$ on $\mathcal{H}$ and a point $\omega \in \mathcal{H}$, we let 
			$$\text{Ave}_t(f)(\omega) = \frac{1}{2\pi} \int_0^{2\pi} f(g_tr_\theta \omega) \, d\theta.$$
		\end{defn}
	
	Note that $\alpha_1(\omega) = \frac{1}{\ell(\omega)^{1+\delta}}$ where $\ell(\omega)$ is the length of the shortest saddle connection. Since $M$ is finite for all $k$ large enough, $\alpha_k(\omega) = 0$. For more information and intuition for the $\alpha_k$, see Section~5.3 of \cite{Dozier19}. From Proposition~5.{5} of \cite{Dozier19}, we have
	\begin{prop}\label{Prop5.3Dozier}
		Fix a stratum $\mathcal{H}$, and $0<\delta < \frac{1}{2}$. We can find a constant $b$ such that for any interval $I\subseteq S^1$, there exists a constant $c_I$ such that for all $\omega \in \mathcal{H}$ and $T\geq 0$,
		$$\int_I \alpha_k(g_Tr_\theta\omega) \,d\theta < c_I e^{-(1-2\delta)T} \sum_{j\geq k} \alpha_j(\omega) + b |I|.$$
	\end{prop}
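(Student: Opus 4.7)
The plan is to prove this Eskin--Masur style averaging inequality for the Margulis-type height functions $\alpha_k$ by the integration-over-circles technique underlying such results, proceeding in three stages.

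First, I would recast the integral in terms of complexes inside $\omega$ itself. For $\theta\in I$ with $\alpha_k(g_T r_\theta\omega)>0$, choose a complex $K_\theta\subset g_Tr_\theta\omega$ of complexity $k$ and area less than $2^{k-M-1}$ realising the maximum. Its preimage $K\subset \omega$ under $g_{-T}r_{-\theta}$ has the same area since $SL(2,\R)$ preserves area, but may require more saddle connections to triangulate because $g_{-T}$ can stretch the interior, so its effective complexity in $\omega$ is only some $j\geq k$. This is the structural reason the right hand side involves $\sum_{j\geq k}\alpha_j(\omega)$ rather than $\alpha_k(\omega)$, and reduces the problem to bounding $\int_I |\partial(g_Tr_\theta K)|^{-(1+\delta)}\,d\theta$ summed over complexes $K\subset \omega$ of each complexity $j\geq k$ satisfying the area constraint.

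Second, I would establish the single-vector estimate
$$\int_I |g_T r_\theta v|^{-(1+\delta)}\,d\theta \;\leq\; c_I\bigl(e^{-(1-2\delta)T}|v|^{-(1+\delta)}+1\bigr).$$
Writing $v=|v|e^{i\phi}$ gives $|g_Tr_\theta v|^2=|v|^2\bigl(e^{2T}\cos^2(\theta+\phi)+e^{-2T}\sin^2(\theta+\phi)\bigr)$, so $|g_Tr_\theta v|$ is at least $e^{-T}|v|$ and grows like $e^T|v|\,|\cos(\theta+\phi)|$ away from the zeros of $\cos$. Splitting $I$ at the threshold $|\cos(\theta+\phi)|=e^{-2T}$ and using $\delta<1/2$ to integrate the resulting singularity yields the exponent $1-2\delta$. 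Applying this to $|\partial(g_Tr_\theta K)|\geq \min_i|g_Tr_\theta v_i|$ and bounding the minimum-edge term by $\alpha_j(\omega)$ through its definition then packages each complex's contribution appropriately. Aggregating, the leading term summed over complexes of complexity $j\geq k$ produces $c_I e^{-(1-2\delta)T}\sum_{j\geq k}\alpha_j(\omega)$, while the constant correction term, summed with multiplicity controlled by a Siegel--Veech type count for the number of complexes of bounded complexity with unit-scale boundary on $\mathcal{H}$, contributes $b|I|$ with $b$ depending only on the stratum.

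The main obstacle is the first stage: matching complexes realising $\alpha_k$ on $g_Tr_\theta\omega$ to complexes in $\omega$ of possibly larger complexity demands careful bookkeeping of how minimal triangulations deform under $g_{-T}$ and why an increase in complexity (and hence the tail sum) is genuinely unavoidable rather than an artefact of the estimate. Once this correspondence is pinned down and the exponent in the single-vector bound is extracted, the remainder of the proof is a straightforward summation.
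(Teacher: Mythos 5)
This proposition is quoted verbatim from \cite{Dozier19} (Proposition~5.3 there); the paper does not prove it and simply cites it, so there is no internal proof to compare against. That said, your sketch does not correctly identify the mechanism behind the result, so I will flag the gap.

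Your stage two, the single-vector estimate $\int_I |g_T r_\theta v|^{-(1+\delta)}\,d\theta \lesssim e^{-(1-2\delta)T}|v|^{-(1+\delta)}+1$, is sound (in fact a direct computation gives the sharper exponent $1-\delta$, which implies what you wrote). The problem is stage one. You justify the appearance of the tail $\sum_{j\geq k}\alpha_j(\omega)$ by asserting that the preimage under $g_{-T}r_{-\theta}$ of a complex of complexity $k$ ``may require more saddle connections to triangulate because $g_{-T}$ can stretch the interior.'' This is false: the $GL(2,\mathbb{R})$ action sends straight segments to straight segments and is a bijection on saddle connections, so a geodesic triangulation of $K$ by $m$ saddle connections in $g_T r_\theta\omega$ pulls back to a geodesic triangulation of the same underlying set by exactly $m$ saddle connections in $\omega$. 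Complexity is a $GL(2,\mathbb{R})$-invariant, as is area (since $\det=1$), so the pulled-back complex satisfies exactly the same constraints defining $\alpha_k$, not $\alpha_j$ for $j>k$. Your ``main obstacle'' paragraph correctly flags that the reason for the tail is unclear under your reasoning, but the diagnosis (minimal triangulations deforming) is not the right place to look.

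The actual source of the tail sum in the Eskin--Masur/Athreya/Dozier argument is different and is the technical heart of the proof. One does not pull back the optimizing complex and count; rather, one runs a case analysis on each fixed complex $K$ of complexity $k$ in $\omega$: either no saddle connection disjoint from $\partial K$ becomes short under $g_T r_\theta$, in which case the single-vector estimate applied to the boundary edges produces the $e^{-(1-2\delta)T}\alpha_k(\omega)$ contribution, or some new saddle connection does become short, in which case it can be adjoined to $K$ to produce a complex of strictly higher complexity whose contribution is absorbed into $\alpha_j(\omega)$ for some $j>k$. That branching, not any change in complexity under the flow, is why the right-hand side is a tail sum rather than a single term. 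Without that inductive structure (and the accompanying uniform bound on the number of ``short'' complexes of each complexity, which is where the $b|I|$ term comes from), the aggregation in your stage three cannot close.
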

	
	The strategy we will take is to extend this theorem to a function $V_\delta$ which is a weighted average of the $\alpha_k$ functions. {The choice of weights for the $\alpha_k$ ensures the functions $V_\delta$ have the following properties stated in Lemma~\ref{lem:JayadevsThesis}.} 
	

	\begin{lem}[Lemma~6.2 \cite{AvilaGouezel13}, Proof in \cite{Athreya06Thesis}] \label{lem:JayadevsThesis}
 Let $\mathcal{V}$ be a neighborhood of the identity in $\mathrm{SL}(,\R)$. Fix $\mathcal{H}$ a connected component of a stratum. For every $0<\delta <1$ there exists $c_1>0$ so that for all $t>0$ there exists a function $V_{\delta}^{(t)}: \mathcal{H} \to [1,\infty)$ and a scalar $b_t$ satisfying the following properties. 
 For all $\omega \in \mathcal{H}$, 
 $$\text{Ave}_{t}(V_{\delta}^{(t)})(\omega) = \int_0^{2\pi} V_{\delta}^{(t)}(g_t r_\theta \omega)\,d\theta \leq c_1e^{-(1-\delta)t} V_{\delta}^{(t)}(\omega) + b_t.$$

Moreover, $V_\delta$ is logsmooth. That is
\begin{equation} \label{eq:logsmooth}
V_{\delta}^{(t)}(g\omega)\leq c_3 V_{\delta}^{(t)}(\omega) 
\end{equation}
for all $\omega \in \mathcal{H}$ and $g\in \mathcal{V}$.

Finally, there exists a constant $C_{\delta,t}$ so that 
\begin{equation}
	\label{eq:v_deltacomparabletoalpha1}
	\frac{V_{\delta}^{(t)}(\omega)}{V_\delta (\omega)} \in [C_{\delta, t}^{-1}, C_{\delta, t}]
\end{equation}
where $V_{\delta} = \max\{1, \alpha_1(\omega)\} = \max\{1, \frac{1}{\ell(\omega)^{1+\delta}}\}$.
\end{lem}

{We will build up to Corollary~\ref{cor:AverageUpperBound}, which is similar to the conclusion of Lemma~\ref{lem:JayadevsThesis} where we average over an interval $I$ instead of $[0,2\pi)$.} We now explicitly construct $V_\delta$ using the following result.
\begin{prop}[Proposition~5.6 \cite{Dozier19}]
	Fix $\mathcal{H}$ and $0<\delta <1$. There exists $C >0$ so that for any $t>0$, there exists constants $b_t$ and $w_t$ so that for any $k$ and any $\omega \in \mathcal{H}$,
	\begin{equation}\label{eq:Dozier5.4}
	\text{Ave}_t(\alpha_k)(\omega) \leq C e^{-t(1-\delta)} \alpha_k(\omega) + w_t \sum_{j>k} \alpha_j(\omega) + b_t.
	\end{equation}
\end{prop}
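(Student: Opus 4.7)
The plan is to refine Proposition~5.3 by analyzing the contribution of each complex to the full circle average separately, improving the exponent on the diagonal $\alpha_k(\omega)$ term from $(1-2\delta)$ to $(1-\delta)$.

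The first step is a direct computation for a single holonomy vector. For $v\in \C$ with $|v|$ above a $t$-dependent threshold, parameterize $r_\theta v$ in polar coordinates and use that $g_t$ acts as the diagonal matrix with entries $e^t,e^{-t}$. Splitting the $\theta$-integral according to whether $r_\theta v$ lies within an arc of angular size $\sim e^{-2t}$ about the vertical direction, one obtains
$$\int_0^{2\pi}\frac{d\theta}{|g_t r_\theta v|^{1+\delta}} \leq C\,\frac{e^{-(1-\delta)t}}{|v|^{1+\delta}}.$$
The exponent $(1-\delta)$ arises because the worst-case integrand, of size $e^{(1+\delta)t}|v|^{-(1+\delta)}$, is supported on an arc of length $\sim e^{-2t}$; the product is $e^{(\delta-1)t}|v|^{-(1+\delta)}$, and the complement of that arc contributes an integral of the same order.

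Next, I would unwind the definition of $\alpha_k$. Since complexes on $g_t r_\theta\omega$ are in bijection with complexes on $\omega$ having the same complexity and area (both are preserved by $g_t r_\theta$), we may write
$$\alpha_k(g_t r_\theta\omega) = \max_{K'}\frac{1}{|\bd g_t r_\theta K'|^{1+\delta}},$$
where the max runs over complexes $K'$ of $\omega$ of complexity $k$ and area less than $2^{k-M-1}$. Estimating the max by partitioning $[0,2\pi)$ into pieces on which a single $K'$ achieves the maximum, and using $|\bd g_t r_\theta K'|\geq |g_t r_\theta v_i|$ for each boundary saddle connection $v_i$, reduces each piece to the single-vector integral of the previous step.

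The analysis splits into two cases. In the \emph{persistent} case, every boundary vector $v_i$ of the relevant complex $K'$ satisfies $|v_i|$ above the $t$-dependent threshold; applying the direct calculation gives a contribution bounded by $Ce^{-(1-\delta)t}|\bd K'|^{-(1+\delta)}\leq Ce^{-(1-\delta)t}\alpha_k(\omega)$ per complex, and summing over the stratum-dependent number of relevant combinatorial types produces the first term of the target inequality. In the \emph{degenerate} case, some $|v_i|$ is very small, and together with neighboring short saddle connections it assembles into a complex of strictly higher complexity $j>k$ on $\omega$ of appropriately controlled area; this contribution is absorbed into $w_t\sum_{j>k}\alpha_j(\omega)$. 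The additive term $b_t$ handles angles where no small-boundary complex of complexity $k$ exists on $g_t r_\theta\omega$, using that on compact subsets of $\mathcal{H}$ cut out by a systole lower bound, $\alpha_k$ is bounded pointwise.

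The main obstacle is the combinatorial bookkeeping in the degenerate case: one must verify that a short boundary saddle connection on $g_t r_\theta\omega$ forces existence of a higher-complexity complex on $\omega$ with controlled area, and enumerate the relevant combinatorial types without double-counting. This is essentially the inductive step used in Eskin--Masur style arguments, and the proof in \cite{Dozier19} follows that framework.
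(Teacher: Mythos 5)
This proposition is cited from \cite{Dozier19}, not re-proved in the paper, so your sketch can only be compared against the Eskin--Masur framework that both \cite{Dozier19} and \cite{EskinMasur01} use. Your overall strategy matches that framework: the single-vector circle average $\int_0^{2\pi}|g_t r_\theta v|^{-(1+\delta)}\,d\theta \lesssim e^{-(1-\delta)t}|v|^{-(1+\delta)}$ is the correct key estimate (and your computation of it is right), and the dichotomy between a well-separated complex and one that must be promoted to higher complexity is indeed the engine of the argument. However, two points in the sketch do not hold up as stated, and they are not merely bookkeeping.

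First, in the \emph{persistent} case you claim the single-vector estimate yields a bound of $Ce^{-(1-\delta)t}|\bd K'|^{-(1+\delta)}$. But applying the estimate to a boundary saddle connection $v_i$ of $K'$ only gives $Ce^{-(1-\delta)t}|v_i|^{-(1+\delta)}$, and $|v_i|^{-(1+\delta)}$ can vastly exceed $|\bd K'|^{-(1+\delta)}$ when $v_i$ is much shorter than the total boundary. Your ``$t$-dependent threshold'' condition on $|v_i|$ does not fix this; in fact, no threshold is needed for the single-vector estimate at all (your derivation never uses a lower bound on $|v|$). What is needed is a comparability condition $|v_i|\gtrsim|\bd K'|$, and the argument must explain why failure of that condition---not smallness of $|v_i|$ relative to $t$---places you in the other case.

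Second, the promotion step is stated with the wrong causal direction. You say a short boundary vector of $K'$ on $\omega$ ``assembles'' with nearby short saddle connections into a complex of higher complexity. But the relevant obstruction is a short saddle connection that is \emph{not} part of $K'$: the Eskin--Masur dichotomy says that for a complex $K'$ of complexity $k$, either all saddle connections shorter than $\rho|\bd K'|$ already lie in $K'$ (the persistent case, where the boundary vectors are comparable to $|\bd K'|$ and the single-vector estimate applies), or there is a short saddle connection outside $K'$ that can be adjoined to form a complex $K''$ of complexity $k+1$ with $|\bd K''|\lesssim|\bd K'|$ and controlled area, feeding the $\sum_{j>k}\alpha_j$ term. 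Verifying the area bound for $K''$ (so that $K''$ is admissible in the definition of $\alpha_{k+1}$) and that this dichotomy applies to the maximizing complex on $g_t r_\theta\omega$ rather than on $\omega$ is the substantive content, and it is precisely what your sketch defers to ``bookkeeping.'' Without it, the exponent $(1-\delta)$ on the diagonal term and the restriction to $j>k$ (rather than $j\geq k$) are not justified.
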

	
	\begin{defn}
		Fix $\delta$ and $t>0$. Define 
		$$\lambda_k^{(t)} = \left(\frac{w_t}{C} + 1\right)^k$$
		where $w_t$ and $C$ are the constants of \eqref{eq:Dozier5.4}. Define $$V_\delta^{(t)}(\omega) = \sum_{k=0}^M \lambda_k^{(t)} \alpha_k(\omega)$$
		where $M$ is the maximum complexity of $\omega$. 
	\end{defn}
	\begin{theopargself} \begin{proof}[of Lemma~\ref{lem:JayadevsThesis}.]
		We first claim 
	\begin{equation}\label{eq:lambda bounds}
		\lambda_k^{{(t)}} Ce^{-(1-\delta)t} + w_t \sum_{j=0}^{k-1} \lambda_j^{{(t)}} \leq 2C \lambda_k^{{(t)}} e^{-(1-\delta)t}.
	\end{equation}
	
	To see this holds, note that $e^{-(1-\delta)t} \geq 1$. Thus since $\lambda_k^{{(t)}} \geq 1$, we have 
	$$1 - \frac{1}{\lambda_k^{{(t)}}} \leq 1 \leq \left(\frac{w_t}{C} +1 - 1\right) \frac{C}{w_t} e^{-(1-\delta)t}.$$
	Simplifying and using the finite geometric series formula, this implies
	$$\sum_{j=0}^{k-1} \lambda_j^{{(t)}} = \frac{\lambda_k^{{(t)}} - 1}{\lambda_1^{{(t)}} - 1} \leq \lambda_k^{{(t)}} \frac{C}{w_t}e^{-(1-\delta)t}.$$
	Multiplying by $w_t$ and adding $\lambda_k^{{(t)}} C e^{-(1-\delta)t}$ to each side yields \eqref{eq:lambda bounds}. 
	
	 We now want to prove that on average $V_{\delta}^{{(t)}}$ shrinks over circles of radius $t$. To see this, we compute
	\begin{align*}
		&\text{Ave}_t(V_{\delta}^{{(t)}})(\omega) \\
		&= \sum_{k=0}^{{M}} \lambda_k^{{(t)}} \text{Ave}_t(\alpha_k)(\omega)\\
		&\leq \sum_{k=0}^{{M}} \lambda_k^{{(t)}} \left(Ce^{-t(1-\delta)} \alpha_k(\omega) + w_t \sum_{j>k} \alpha_j(\omega) + b_t \right) \tag{by \eqref{eq:Dozier5.4}}\\
		&= \left(\sum_{k=0}^{{M}} \lambda_k^{{(t)}} Ce^{-t(1-\delta)} \alpha_k(\omega)\right) + w_t \sum_{k=1}^{{M}} \alpha_k(\omega) \left(\sum_{j=0}^{k-1} \lambda_j^{{(t)}}\right) + b_t \tag{replacing $b_t = b_t \left(\sum_{k=0}^{{M}} \lambda_k^{{(t)}}\right)$}\\
		&\leq 2Ce^{-t(1-\delta)} \lambda_0 ^{{(t)}} \alpha_0(\omega) + \sum_{k=1}^{{M}} \alpha_k(\omega) \left[\lambda_k^{{(t)}} Ce^{-t(1-\delta)} + w_t \sum_{j=0}^{k-1} \lambda_j\right] + b_t \tag{by \eqref{eq:lambda bounds} and replacing $C$ with $2C$}\\
		&\leq C e^{-t(1-\delta)} \sum_{k=0}^{{M}} \lambda_k^{{(t)}} \alpha_k(\omega) + b_t \\
		&=  Ce^{-t(1-\delta)} V_\delta^{{(t)}}(\omega) + b_t.
	\end{align*}

	
	The logsmoothness of the $V_{\delta}^{{(t)}}$ follows from \cite[Proof of Proposition 7.2]{EskinMasur01}.
	\qed \end{proof}\end{theopargself}

Now that we have defined $V_\delta^{{(t)}}$ with the logsmooth property, we proceed to extending the results of \cite{Dozier19} to include the $V_\delta^{{(t)}}$ function.

\begin{lem}\label{lem:DozierShadowing} 

There exists a constant $c_2>0$ so that for any $\tau > 0$ and $I\subseteq S^1$ an interval, there exists $t_0(\tau,|I|) \geq 0$ so that for any $\omega \in \mathcal{H}$ and $t> t_0$, we have 
$$\int_I V_{\delta}^{(\tau)}(g_{t+\tau} r_\theta \omega) \,d\theta \leq c_2 \int_{J} \text{Ave}_\tau(V_\delta^{(\tau)})(g_tr_\theta \omega)\,d\theta$$
where $J\subseteq S^1$ is an interval (that could depend on all other parameters) with $|J| = |I|$. 
\end{lem}

\begin{proof}
	Note that this result would follow directly from linearity combined with Lemma~5.2 of \cite{Dozier19}, except as stated in Lemma~5.2 the interval $J$ could depend on $\alpha_i$. However following the proof exactly using linearity to replace each $\alpha_i$ with $V_\delta^{(\tau)}$, we take the interval $2I$ with the same center as $I$ and twice the length. Then in the last 5 lines of the proof, we write $2I = J_1 \cup J_2$ as a union of two intervals with $|J_1| = |J_2| = |I|$. Then 
	$$\max_{j=1,2} \int_{J_j}  \text{Ave}_\tau(V_\delta^{(\tau)}(g_tr_\theta \omega) \geq \frac{1}{2} \int_{2I} \text{Ave}_{\tau}V_{\delta}^{(\tau)} (g_tr_\theta \omega).$$
	Now define $J$ (which now depends on $V_\delta^{{(}\tau{)}}$ instead of individual $\alpha_k$) to be the interval on which the maximum is achieved, and the proof follows by linearity as desired.
\qed \end{proof}

Now we state Proposition~5.{5} of \cite{Dozier19} for the $V_\delta^{{(t)}}$ functions.

\begin{prop} \label{prop:VdeltaAverageInterval}
	Fix a stratum $\mathcal{H}$ and $0<\delta < 1$. Let $c_1$ and $c_2$ be the constants of Lemma~\ref{lem:JayadevsThesis} and Lemma~\ref{lem:DozierShadowing}, respectively. Choose $\tau> 0$ large enough so that 
	$$c_1c_2e^{-(1-\delta)\tau} < \frac{1}{2}.$$ Let $I\subseteq S^1$ be an interval and by Lemma~\ref{lem:DozierShadowing} let $m$ be the smallest possible integer so that $(m-1)\tau > t_0(\tau,|I|).$ That is $m = 1 + \left \lceil \frac{t_0(\tau, |I|)}{\tau}\right\rceil$.
	There are constants $c= c(\tau,\delta, |I|) >0$ and $b_\tau = b(\tau, \delta)$ so that for all $n\geq m$ and for any $\omega \in \mathcal{H}$,
	\begin{equation}\label{eq:Intervalinequality}
		\int_I V_{\delta}^{(\tau)}(g_{n\tau} r_\theta \omega) \,d\theta < c e^{-(1-\delta)n\tau} V_{\delta}^{(\tau)}(\omega) + b_\tau |I|.
	\end{equation} 
\end{prop}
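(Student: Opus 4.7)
The plan is to argue by induction on $n \geq m$. Writing $u_n(K,\omega) = \int_K V_\delta^{(\tau)}(g_{n\tau} r_\theta \omega)\, d\theta$ for an interval $K \subseteq S^1$, the inductive step combines Lemma~\ref{lem:DozierShadowing}, which trades the integral over $I$ at time $(n+1)\tau$ for one over an interval $J_n$ (with $|J_n|=|I|$) at time $n\tau$, with Lemma~\ref{lem:JayadevsThesis} applied \emph{pointwise} at the surface $g_{n\tau} r_\phi \omega$. Composing these two ingredients yields the one-step recursion
$$u_{n+1}(I,\omega) \;\leq\; c_1 c_2\, e^{-(1-\delta)\tau}\, u_n(J_n,\omega) \;+\; c_2 b_\tau |I|,$$
which is legitimate whenever $n\tau > t_0(\tau,|I|)$; this is precisely why $m$ is chosen so that $(m-1)\tau > t_0(\tau, |I|)$, so the recursion applies for all $n \geq m-1$.

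I would take the inductive hypothesis to be the conclusion \eqref{eq:Intervalinequality} itself, asserted uniformly over all intervals $K$ of length $|I|$ rather than only for $I$. This uniformity is essential, because $J_n$ produced by the shadowing lemma need not coincide with $I$ and may depend on $n,\omega$. Substituting the hypothesis into the recursion and using the standing contraction $c_1 c_2 e^{-(1-\delta)\tau} < \tfrac{1}{2}$, the inhomogeneous terms telescope into a geometric series bounded by $2 c_2 b_\tau |I|$, so the additive constant $b_\tau$ of the conclusion is essentially the one coming from Lemma~\ref{lem:JayadevsThesis}. For the base case $n = m$, I would bound $u_m$ by applying Lemma~\ref{lem:JayadevsThesis} with the parameter choice $t = m\tau$ over the full circle and then restricting to $I$, passing between $V_\delta^{(m\tau)}$ and $V_\delta^{(\tau)}$ via the comparability estimate~\eqref{eq:v_deltacomparabletoalpha1}; this yields an initial bound $u_m(I,\omega) \leq c_m V_\delta^{(\tau)}(\omega) + d_m|I|$ with $c_m, d_m$ depending only on $\tau,\delta,|I|$.

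The main obstacle is controlling the homogeneous term: iterating the recursion produces the factor $(c_1 c_2 e^{-(1-\delta)\tau})^{n-m} = (c_1 c_2)^{n-m} e^{-(1-\delta)(n-m)\tau}$ rather than the cleaner $e^{-(1-\delta)n\tau}$ appearing in the target. The contraction $c_1 c_2 e^{-(1-\delta)\tau} < 1/2$ guarantees uniform geometric smallness, but realizing the exact rate in the conclusion requires absorbing the multiplicative loss $(c_1 c_2)^{n-m}$ and the starting factor $e^{(1-\delta)m\tau}$ into the constant $c = c(\tau,\delta,|I|)$. Choosing constants in the correct order---first $\tau$ (from the contraction bound), then $m = 1 + \lceil t_0(\tau,|I|)/\tau\rceil$, and only then $c$ and $b_\tau$---is what makes this bookkeeping consistent and closes the induction uniformly over $n \geq m$ and $\omega \in \mathcal{H}$.
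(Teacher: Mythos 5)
Your proposal follows essentially the same route as the paper: the one-step recursion obtained by composing Lemma~\ref{lem:DozierShadowing} with Lemma~\ref{lem:JayadevsThesis} applied pointwise, iterated from level $n$ down to level $m$ over the intervals $J_{n-1},\dots,J_{m-1}$ of length $|I|$, with the inhomogeneous terms summed as a geometric series via $c_1c_2e^{-(1-\delta)\tau}<\tfrac12$ and the remaining prefactors folded into $c(\tau,\delta,|I|)$ --- including the absorption of $(c_1c_2)^{n-m}$ and $e^{(1-\delta)m\tau}$, which is exactly the bookkeeping the paper itself performs at this step. The only real difference is cosmetic: for the base case the paper bounds $\int_{J_{m-1}}V_{\delta}^{(\tau)}(g_{(m-1)\tau}r_\theta\omega)\,d\theta$ directly by $c_3^{k(m,\tau)}V_{\delta}^{(\tau)}(\omega)\,|I|$ using the logsmooth property \eqref{eq:logsmooth}, whereas you invoke Lemma~\ref{lem:JayadevsThesis} at time $m\tau$ together with the comparability \eqref{eq:v_deltacomparabletoalpha1}; both yield a base bound with constants depending only on $\tau,\delta,|I|$.
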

\begin{proof}
	Let $n\geq m$ and $\omega \in \mathcal{H}$. Our goal is to construct the constants $c$ and $b_\tau$ so that Equation~\ref{eq:Intervalinequality} holds. Indeed applying Lemma~\ref{lem:DozierShadowing} followed by Lemma~\ref{lem:JayadevsThesis}, we have 
	\begin{align*}
		\int_{I} V_{\delta}^{(\tau)} (g_{n\tau}r_\theta \omega) \,d\theta &\leq c_2 \int_{J_{n-1}} \text{Ave}_{\tau}(V_{\delta}^{(\tau)})(g_{(n-1)\tau} r_\theta \omega) \,d\theta \\
		&\leq c_2 \left(\int_{J_{n-1}} c_1 e^{-(1-\delta)\tau} V_{\delta}^{(\tau)}(g_{(n-1)\tau} r_\theta \omega)\,d\theta + b_\tau\right)\\
		&= c_2c_1 e^{-(1-\delta) \tau} \int_{J_{n-1}} V_{\delta}^{(\tau)}(g_{(n-1)\tau}r_\theta\omega)\,d\theta + c_2 b_\tau |I|
	\end{align*}
	where the last equality follows from the fact that $|J_{n-1}| = |I|$.
	
	Now repeatedly applying this inequality for $n-1, n-2,\ldots, m$ with $I$ replaced by $J_{n-1}, J_{n-2}$ through $J_{m}$ which all have length $|I|$, we obtain
	\begin{align*}
		\int_{I} V_{\delta}^{(\tau)}(g_{n\tau}r_\theta \omega) &\leq \left(c_1c_2 e^{-(1-\delta)\tau}\right)^{n-m + 1}\int_{J_{m-1}} V_{\delta}^{(\tau)} (g_{(m-1)\tau} r_\theta \omega)\,d\theta \\
		&\quad+ |I| b_{\tau}c_2 \sum_{j=0}^{n-m+1} \left(c_1c_2e^{-(1-\delta)\tau}\right)^j\\
		&\leq  \left(c_1c_2 e^{-(1-\delta)\tau}\right)^{n-m + 1}\int_{J_{m-1}} V_{\delta}^{(\tau)} (g_{(m-1)\tau} r_\theta \omega)\,d\theta +  |I| b_\tau, \\
		\end{align*}
		where in the last inequality we replaced $b_\tau$ with $2c_2b_\tau$.
		By the logsmooth property of $V_{\delta}$ from Lemma~\ref{lem:JayadevsThesis}, splitting into small steps, there exists some $k(m,\tau)$ so that 
		$$V_{\delta}^{(\tau)}(g_{(m-1)\tau} r_\theta \omega) \leq c_3^{k(m,\tau)} V_{\delta}^{(\tau)}(\omega).$$
		
		Thus we can write our constant $c$ as
		$$c = (c_1c_2)^{n-m+1} \left(e^{-(1-\delta)\tau}\right)^{-m+1} c_3^{k(m,\tau)} |I| $$
		where we note $m$ depends on $\tau$ and $|I|$, so $c$ depends only on $\delta$, $\tau$ and $|I|$. \qed \end{proof}

\begin{coro}\label{cor:AverageUpperBound}
	Fix a connected component of a  stratum $\mathcal{H}$ and $0<\delta< 1$. There exists $\tau > 0$ so that for any interval $I\subseteq S^1$, there exists constants $c= c(\tau, \delta, |I|)  >0$ and $b_\tau = b(\tau,\delta)$ so that there exists an $\ell_0>0$ so that for all $\ell \geq \ell_0$ and for any $\omega \in \mathcal{H}$, 
	$$\int_I V_{\delta}^{(\tau)}(g_\ell r_\theta \omega) \,d\theta \leq ce^{-(1-\delta)\ell}V_{\delta}^{(\tau)}(\omega) + b_\tau |I|.$$
\end{coro}
\begin{proof}	
	We choose $\tau$ to satisfy the assumption of Proposition~\ref{prop:VdeltaAverageInterval}, and for $\lambda>0$, set $\mathcal{V}$ in the $KAK$ decomposition to be a left $K$-invariant sets which contains $g_t$ for all $-\lambda < t< \lambda$. Choose $\ell_0 = m\tau >0$ where $m$ is defined in Proposition~\ref{prop:VdeltaAverageInterval}. Let $\ell \geq \ell_0$. Pick $n_0 = \min\{n\in \N: n\tau > \ell\}$
	and note $n_0 - 1 \geq m$. Let $r = n_0\tau - \ell$. Set $k = \lceil \frac{r}{\lambda}\rceil$, and $r_0 = \frac{r}{k}$. Then $r_0 \leq \lambda$, which implies ${g_{r_0}} \in \mathcal{V}$ and we can apply \eqref{eq:logsmooth}. We also want a lower bound on $r_0$. Assume $r_0 < \frac{\lambda}{3}$, in which case $k > 3r/\lambda \geq 3(k-1)$ and we conclude $k\leq 3/2$ and thus $k=1$. Thus if $k\geq 2$ we have $r_0 \geq \frac{\lambda}{3}$, or $k=1$. 	
	Then from Proposition~\ref{prop:VdeltaAverageInterval},
	\begin{align*}
		\int_I V_{\delta}^{(\tau)} (g_{\ell} r_\theta \omega)\,d\theta &= \int_I V_{\delta}^{(\tau)}(g_{-kr_0}g_{n_0\tau} r_\theta \omega)\,d\theta\\
		&\leq c_3^k \int_I V_{\delta}^{(\tau)}(g_{n_0\tau} r_\theta \omega) \,d\theta \tag{by \eqref{eq:logsmooth}}\\
		&\leq c_3^k \left[ce^{-(1-\delta) n_0\tau} V_{\delta}^{(\tau)}(\omega) + b_\tau |I|\right] \tag{by \eqref{eq:Intervalinequality}}\\
		&\leq c_3^k \left[ce^{-(1-\delta) \ell} V_{\delta}^{(\tau)}(\omega) + b_\tau |I|\right] \tag{since $n_0\tau \geq \ell$}
		\end{align*}
		Thus if $k=1$, we obtain the final constant which is independent of $\ell$. Otherwise we assume $k\geq 2$, and 
		\begin{align*}
		\int_I V_{\delta}^{(\tau)} (g_{\ell} r_\theta \omega)\,d\theta&\leq c_3^{\frac{\tau}{r_0}} \left[ce^{-(1-\delta) \ell} V_{\delta}^{(\tau)}(\omega) + b_\tau |I|\right] \tag{since $r \leq \tau$}\\
		&\leq c_3^{\frac{3\tau}{\lambda}} \left[ce^{-(1-\delta) \ell} V_{\delta}^{(\tau)}(\omega) + b_\tau |I|\right].
	\end{align*}
	Thus  given the lower bound on $r_0 \geq \frac{\lambda}{3}$, the final constants only depend on $\tau, \lambda$ and not $\ell$ and we obtain the desired result.
\qed \end{proof}	

\subsection{Completion of the verification of (4) (d)} \label{subsec:4d}
To obtain an upper bound for $\mu(B_i\cap C_j)$, by $g_t$-invariance of $\mu$, it suffices to find an upper bound for $\mu(\tilde{B}_i \cap \tilde{C}_j)$ where $\tilde{B}_i = \bigcup_{\theta \in I} r_\theta W_i= I\cdot W_i$ and $\tilde{C}_j = g_{f(i,j)}S(c_{\mathcal{H}},\sigma, 2^j)$ 
		for $f(i,j) = \log\left(b^{j-i} \sqrt{\frac{\psi(b^j)}{\psi(b^i)}}\right).$
		
		

{
Note,  the shortest saddle connection on $\omega \in S(c_{\mathcal{H}},\sigma, b^{j})$ is at most 
		$\sqrt{\frac{2\sigma}{\psi(b^j)} }$ and so by (4.2) for such an $\omega$ we have $$C_{\delta,\tau}^{-1} \left(\frac{\psi(b^{j})}{2\sigma}\right)^{\frac{1+\delta}{2}} \leq V_\delta^{(\tau)}(\omega).$$ So, if $\omega'\in \tilde{C}_j$, we have 
		$V_{\delta}^{(\tau)}(g_{-f(i,j)}\omega') \geq C_{\delta,\tau}^{-1} \left(\frac{\psi(b^{j})}{2\sigma}\right)^{\frac{1+\delta}{2}} .$}
Thus, 
{because $\mu(\{x \in S: h(x)>C\})\leq C^{-1}\int_S |h(x)|\,d\mu(x)$} {we have}
		\begin{align} \label{eq:align1}
			\mu(\tilde{B}_i \cap \tilde{C}_j) &\leq \mu\left(\left\{\omega\in \tilde{B}_i: V_\delta^{(\tau)}(g_{-f(i,j)}\omega) \geq C_{\delta,\tau}^{-1} \left(\frac{\psi(b^{j})}{2\sigma}\right)^\frac{1+\delta}{2}\right\}\right)\\
			&\leq C_{\delta,\tau}\left(\frac{\psi(b^j)}{2\sigma}\right)^{-\frac{1+\delta}{2}} \int_{I\cdot W_i} V_{\delta}^{(\tau)} ( g_{-f(i,j)} \omega) \,d\mu(\omega). \nonumber
\end{align}
Disintegrating the measure $\mu = \,d\theta \,d \tilde{\mu}$ on $\mathrm{SO}(2) \times (\mathcal{H} / \mathrm{SO}(2) )$ and increasing to a full $\mathrm{SO}(2)$ orbit
\begin{align*}
			\eqref{eq:align1} &\leq C_{\delta,\tau}\left(\frac{\psi(b^j)}{2\sigma}\right)^{-\frac{1+\delta}{2}} \int_{\mathrm{SO}(2) \cdot W_i / \mathrm{SO}(2)} \int_I V_{\delta}^{(\tau)} (g_{-f(i,j)}  r_\theta \tilde{\omega}) \,d\theta \,d\tilde{\mu}(\tilde{\omega}) \\
			&\leq C_{\delta,\tau}\left(\frac{\psi(b^j)}{2\sigma}\right)^{-\frac{1+\delta}{2}} \int_{\mathrm{SO}(2) \cdot W_i / \mathrm{SO}(2)} ce^{-(1-\delta)(f(i,j))} V_\delta^{(\tau)}(\tilde{\omega}) + b_\tau |I| \,d\tilde{\mu}(\tilde{\omega}) \tag{by Corollary~\ref{cor:AverageUpperBound} and monotonicity of $\psi$, $f(i,j) \geq \log(b^{j-i}) = \ell_0(j-i)> \ell_0$}\\
			&= C_{\delta,\tau}\left(\frac{\psi(b^j)}{2\sigma}\right)^{-\frac{1+\delta}{2}} \left(ce^{-(1-\delta)(f(i,j))}\int_{W_i} V_\delta^{(\tau)}(\omega)\, d\mu(\omega) + b_\tau |I| \mu(W_i)\right). \tag{since $\mathrm{SO}(2)W_i$ is $\mathrm{SO}(2)$-invariant}\\
\end{align*}
Since all holonomy vectors in $W_i$ are contained in the circle we apply the upper bound for $V_\delta^{(\tau)}$
\begin{align*}
			\eqref{eq:align1}&\leq C_{\delta,\tau}\left(\frac{\psi(b^j)}{2\sigma}\right)^{-\frac{1+\delta}{2}} \mu(W_i) \left(ce^{-(1-\delta)(f(i,j))} \frac{C_{\delta,\tau}}{c_{\mathcal{H}}} \left(\frac{\psi(b^{j})}{\sigma}\right)^\frac{1+\delta}{2} + b_\tau |I|\right) \\
			&\leq \mu(\tilde{B}_i) \left({\frac{c}{c_\mathcal{H}}e^{-(1-\delta)f(i,j)} C_{\delta,\tau}^2 2^{\frac{1+\delta}{2}}} + b_\tau |I| C_{\delta,\tau}\left(\frac{2\sigma}{\psi(b^j)}\right)^\frac{1+\delta}{2} \right).\tag{since $W_i \subseteq \tilde{B}_i$.} 
		\end{align*}
		
		Our goal is to compare the equation on the right hand side to the volume of $C_j$. Note by the construction of the MSV measure, there is a constant $m$ so that 
		$$\mu(C_j) = \mu(S(c_{\mathcal{H}},\sigma, 2^j)) \geq m \left(\sqrt{\frac{2\sigma}{\psi(b^j)}}\right)^2 = m \frac{2\sigma}{\psi(b^j)}.$$
		
		Thus we have 
		\begin{align*}
			&\mu(B_j\cap C_j) \\
			&= \mu(\tilde{B}_j \cap \tilde{C}_j) \\
			&\leq \mu(B_i) \left({ \frac{c}{c_{\mathcal{H}}}} C_{\delta, \tau}^2 2^{\frac{1+\delta}{2}} e^{-(1-\delta)f(i,j)} + b_\tau |I| C_{\delta, \tau} \left(\frac{\mu(C_j)}{m}\right)^\frac{1+\delta}{2} \right)\\
			&= \mu(B_i) \left({ \frac{c}{c_{\mathcal{H}}}}C_{\delta, \tau}^22^{\frac{1+\delta}{2}} 2^{-(j-i)(1-\delta)} \left(\frac{\psi(b^i)}{\psi(b^j)}\right)^\frac{1-\delta}{2} + b_\tau |I| C_{\delta, \tau} \left(\frac{\mu(C_j)}{m}\right)^\frac{1+\delta}{2} \right) \tag{by the definition of $f(i,j)$}\\
			&\leq \mu(B_i) \left({ \frac{c}{c_{\mathcal{H}}}}C_{\delta, \tau}^22^{\frac{1+\delta}{2}} 2^{-(j-i)(1-\delta)} + \frac{b_\tau |I| C_{\delta, \tau}}{m^\frac{1+\delta}{2}} \mu(C_j)^\frac{1+\delta}{2} \right)\tag{since $\psi(R)$ is a nondecreasing sequence and $i<j$, $\psi(b^i) \leq \psi(b^j)$}\\
			&\leq \mu(B_i) \left(c_1 2^{-(j-i)(1-\delta)} + c_2 \mu(C_j)^{\frac{1+\delta}{2}}\right). \tag{defining $c_1 = { \frac{c}{c_{\mathcal{H}}}}C_{\delta, \tau}^22^{\frac{1+\delta}{2}}$ and $c_2 = \frac{b_\tau |I| C_{\delta, \tau} }{m^\frac{1+\delta}{2}}$}
		\end{align*}
		
		Picking $C > \max\{c_1, c_2\}$ we obtain the desired inequality.

\begin{acknowledgements}
We would like to thank Jayadev Athreya, Osama Khalil, and Howard Masur for useful discussions. We also thank the anonymous referee for improving the readability of the paper.
\end{acknowledgements}

%
 \section*{Conflict of interest}
 The authors declare that they have no conflict of interest.

\bibliographystyle{spmpsci}      
\bibliography{Sources}   

%
%

\end{document}